\numberwithin{equation}{section}
\theoremstyle{plain} %
\newtheorem{theorem}{\noindent\sc \bf{Theorem}}[section] %
\newtheorem{lemma}[theorem]{\noindent\sc \bf{Lemma}}
\newtheorem{proposition}[theorem]{\noindent\sc \bf{Proposition}}
\newtheorem{thm}{\noindent\sc \bf{Theorem}} %
\newtheorem{cor}[thm]{\noindent\sc \bf{Corollary}}
\theoremstyle{definition} %
\newcommand{\fulltoday}{\number\day\space \ifcase\month\or
    January\or February\or March\or April\or May\or June\or
    July\or August\or September\or October\or November\or December\fi
    \space\number\year}
\newcommand{\bM}{{\bf M}}
\newcommand{\C}{\mathbb{C}}
\newcommand{\NN}{\mathbb{N}}
\newcommand{\R}{\mathbb{R}}
\newcommand{\Q}{\mathbb{Q}}
\newcommand{\Z}{\mathbb{Z}}
\newcommand{\SL}{\mathrm{SL}}
\newcommand{\GL}{\mathrm{GL}}
\newcommand{\M}{\mathrm{M}}
\newcommand{\Mp}{\mathrm{Mp}}
\newcommand{\Gr}{\mathrm{Gr}}
\newcommand{\0}{\mathbf{0}}
\newcommand{\gre}{\mathfrak{e}}
\newcommand{\grH}{\mathfrak{H}}
\newcommand{\e}{\mathbf{e}}
\newcommand{\tp}{{}^t}
\renewcommand{\Re}{{\rm Re}}
\renewcommand{\Im}{{\rm Im}}
\newcommand{\cC}{\mathcal{C}}
\newcommand{\cD}{\mathcal{D}}
\newcommand{\cF}{\mathcal{F}}
\newcommand{\cH}{\mathcal{H}}
\newcommand{\cM}{\mathcal{M}}
\newcommand{\cT}{\mathcal{T}}
\newcommand{\cV}{\mathcal{V}}
\newcommand{\cX}{\mathcal{X}}
\newcommand{\cY}{\mathcal{Y}}
\newcommand{\inv}{^{-1}}
\newcommand{\Id}{{\rm Id}}
\newcommand{\cross}{^{\times}}
\newcommand{\x}{^{\times}}
\newcommand{\up}{^{\uparrow}}
\newcommand{\down}{^{\downarrow}}
\newcommand{\half}{\dfrac{1}{2}}
\newcommand{\vecm}[1]{\left( \begin{array}{c} #1 \end{array} \right)}
\newcommand{\Nmat}[2]{\left( \begin{array}{cc} #1 \\ #2 \end{array} \right)}
\newcommand{\Smat}[3]{\left( \begin{array}{ccc} #1 \\ #2 \\ #3 \end{array} \right)}
\renewcommand{\pmod}[1]{\; (\bmod \; #1)}
\newcommand{\form}[2]{\langle #1 ,#2  \rangle}
\newcommand{\bs}{\backslash}
\newcommand{\oo}{_{\infty}}
\newcommand{\ol}[1]{\overline{#1}}
\newcommand{\act}[1]{\langle #1 \rangle}
\begin{document}

\begin{center}
 {\Large \textbf{Symmetries for 
Siegel Theta Functions, \\
 Borcherds Lifts and Automorphic Green Functions\\[0.4cm]}}
{\large \textbf{Bernhard Heim and Atsushi Murase}}\\[0.4cm]
\end{center}

\noindent
\textbf{2000 Mathematics Subject Classification numbers}: primary 11F55,
secondary 11G18\\[0.2cm]

\noindent
\textbf{Abstract.}\ 
Let $q$ be an integral  quadratic form 
of signature $(2,m+2)$.
We will show that the Siegel theta functions attached to $q$ 
satisfies certain symmetries.
As an application, we prove the symmetries for automorphic forms
on the orthogonal group of $q$ closely related to Heegener divisors
 (Borcherds lifts and automorphic Green functions).

\noindent
%

\section{Introduction}
\subsection{The main results}
Let $(L,q)$ be a quadratic space over $\Z$ of signature
$(b^+,b^-)$ with $b^-\ge b^+>0$, and $G$
  the orthogonal group of $q$.
Let $\Gr(L)$ be
 the set of $b^+$-dimensional subspaces $v$ of
$V=L\otimes_{\Z}\R$ such that $q|_v>0$.
The Grassmannian $\Gr(L)$ of $L$ is a real analytic manifold on which
$G(\R)$ acts  in a natural manner.
For $\lambda\in V$ and $v\in\Gr(L)$, we denote by 
$\lambda_v$ and $\lambda_v^{\perp}$  the projections of 
$\lambda$ to $v$ and $v^{\perp}$ respectively, where
$v^{\perp}$ is the orthogonal complement  of $v$
in $V$ with respect to $q$.
Let $L^*$ be the dual lattice of $L$ and let
$\grH=\{\tau\in\C\mid\Im(\tau)>0\}$ be
the upper half plane.
For $\alpha\in L^*/L$ and $(\tau,v)\in \grH\times \Gr(L)$, we define
the \emph{Siegel theta function}
by
\[
 \Theta_{\alpha}(\tau,v)=\sum_{\lambda\in \alpha+L}\exp
\left(2\pi i \left(\tau q(\lambda_v)+\ol{\tau}q(\lambda_v^{\perp})\right)\right).
\]
The Siegel theta function, first introduced by
Siegel (\cite{Si}), is of fundamental importance in number theory.
In particular it plays an crucial role 
in the arithmetic of quadratic forms and  the theory of
automorphic forms of several variables.

From now on we
assume that $b^+=2$, in which case $\Gr(L)$ has a structure of hermitian
symmetric domain of type IV.
The object of the paper is to show that certain 
symmetries hold for the Siegel
theta functions. As an application of this result,
we obtain similar symmetries for  automorphic forms on 
$\Gr(L)$ closely related to Heegner divisors (Borcherds lifts and automorphic
Green functions).

To state our main results more precisely, let $m$ be a nonnegative
integer and
$S$  an even integral positive definite symmetric matrix of degree $m$.
Let $L=\Z^{m+4}$ and $q$ a quadratic form given by 
\[
 q(x)=\half\,  \tp x Qx \qquad(x\in L),
\]
where
\[
 Q=
\left(
\begin{matrix}
 &&&&1 \\
&&&1&\\
&&-S&&\\
&1&&&\\
1&&&&
\end{matrix}
\right).
\]
Note that we include the case of $m=0$, in which case
$q(x)=x_1x_4+x_2x_3$ for $x=\tp(x_1,x_2,x_3,x_4)\in
\Z^4$.

It is known that $\Gr(L)$ is isomorphic to the tube domain $\cD=\{(z,w,z')\in
\grH\times \C^m\times \grH\mid 2\,\Im(z)\Im(z')-\tp \Im(w)\,S\,\Im(w)>0\}$.
We henceforth consider $\Theta_{\alpha}$ as a function on $\grH\times \cD$.
The main result of the paper
 is  the following \emph{additive symmetry} for  Siegel theta
functions.
\begin{thm}
\label{th:A}
 For $\alpha\in L^*/L$ and a natural number $n$, we have
\[
 \sum_{a,b,d}\Theta_{\alpha}\left(\tau,\left(\dfrac{az+b}{d},
\dfrac{\sqrt{n}}{d}w,
z'\right)\right)
=\sum_{a,b,d}\Theta_{\alpha}\left(\tau,\left(z,\dfrac{\sqrt{n}}{d}w,
\dfrac{az'+b}{d}\right)\right),
\]
where $(a,b,d)$ runs over the set 
$\cH_n=\{(a,b,d)\in\Z^3 \mid a,d>0,ad=n,0\le b<d\}$.
\end{thm}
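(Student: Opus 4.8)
The natural strategy is to rewrite both sides of the claimed identity as a single theta-type sum over a larger lattice, after which the Hecke operators on the $z$- and $z'$-variables are realized as the \emph{same} finite collection of lattice transformations (up to relabeling). Concretely, for a subspace $v\in\Gr(L)$ the quantities $q(\lambda_v)$ and $q(\lambda_v^\perp)$ depend on $v$ only through the majorant decomposition, and in the tube-domain model the action of the matrices $\binom{\,a\;b\,}{\,0\;d\,}$ on $z$ (together with the scaling $w\mapsto \sqrt{n}\,w/d$) is induced by an \emph{integral} similitude of $(L,q)$ of norm $n$. The first step, then, is to identify explicitly, for each $(a,b,d)\in\cH_n$, an element $g_{a,b,d}\in\GL_{m+4}(\Q)$ such that the map $v\mapsto g_{a,b,d}v$ on $\Gr(L)$ corresponds to $(z,w,z')\mapsto\bigl(\tfrac{az+b}{d},\tfrac{\sqrt n}{d}w,z'\bigr)$, and likewise $g'_{a,b,d}$ for the right-hand side acting on $z'$. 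Since $g_{a,b,d}$ preserves the $\Q$-structure of $q$ up to the scalar $n$, one has $q\bigl((g_{a,b,d}^{-1}\mu)_v\bigr)=\tfrac1n q(\mu_{g_{a,b,d}v})$ for $\mu\in V$, so substituting $\mu=n\lambda$ with $\lambda\in\alpha+L$ turns $\Theta_\alpha(\tau,g_{a,b,d}v)$ into a sum over $n(\alpha+L)$ of the original theta summand evaluated at $v$.

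The second step is to combine the sum over $(a,b,d)\in\cH_n$ with the sum over $\lambda$ into one sum over the set $\{g_{a,b,d}^{-1}\mu : (a,b,d)\in\cH_n,\ \mu\in\alpha+L\}$ of vectors in $V$, and then to show that this union of cosets, counted with multiplicity, coincides with the analogous union coming from the right-hand side with $g'_{a,b,d}$. This is where the arithmetic of $\cH_n$ enters: the classical fact that $\{\binom{\,a\;b\,}{\,0\;d\,}:(a,b,d)\in\cH_n\}$ is a set of representatives for $\SL_2(\Z)\backslash M_2(\Z)_{\det=n}$, together with the observation that $q$ has an \emph{automorphism} swapping the two hyperbolic planes (the two ``$z$'' and ``$z'$'' factors) — visible in $Q$ as the symmetry exchanging the outer antidiagonal $1$'s with the inner ones — lets one match each term $g_{a,b,d}^{-1}(\alpha+L)$ on the left with a term $g'_{a',b',d'}{}^{-1}(\alpha+L)$ on the right. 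One must check that this swap automorphism fixes every class $\alpha\in L^*/L$, or more precisely permutes the summands $\Theta_\alpha$ trivially; this should follow from the explicit form of $Q$ and the fact that the relevant glue vectors live in the $S$-block, which the swap leaves untouched.

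The third step is the bookkeeping: verify that the two families of lattice cosets agree as multisets. The key identity is that for a fixed $\mu\in L^*$, the number of $(a,b,d)\in\cH_n$ with $g_{a,b,d}^{-1}\mu$ in a prescribed coset equals the corresponding count on the $z'$-side; this reduces, after projecting to the hyperbolic plane containing $z$ (resp.\ $z'$) and using that $\sqrt n\,w/d$ appears \emph{symmetrically} on both sides, to the statement that $\cH_n$ is stable under $(a,b,d)\mapsto(d,?,a)$ in the appropriate sense modulo $\SL_2(\Z)$ — again the classical Hecke-operator symmetry $T_n^{t}=T_n$. I expect the main obstacle to be precisely this matching at the level of cosets with multiplicity: one has to be careful that the shift $w\mapsto\sqrt n\,w/d$ (which is \emph{not} an integral operation on $L$, only on a sublattice depending on $d$) is handled correctly, so that after clearing denominators the vector $\mu$ runs over an honest lattice coset and the $d$-dependence cancels between the two sides. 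Once the coset bookkeeping is in place, the equality of the two theta sums is immediate, since the summand $\exp\!\bigl(2\pi i(\tau q(\lambda_v)+\ol\tau q(\lambda_v^\perp))\bigr)$ is literally the same function of the (transported) vector on both sides. A secondary technical point, worth isolating as a lemma, is the absolute convergence of all the sums involved — guaranteed because $q$ restricted to $v$ is positive definite and to $v^\perp$ negative definite — which legitimizes the rearrangements above.
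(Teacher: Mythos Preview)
Your first step contains a concrete error: for $n$ not a perfect square there is \emph{no} rational $g_{a,b,d}\in\GL_{m+4}(\Q)$ whose action on $\Gr(L)$ realizes $(z,w,z')\mapsto(\tfrac{az+b}{d},\tfrac{\sqrt n}{d}w,z')$, and in particular no rational similitude of norm $n$ does the job. The natural rational candidate $\iota^{\uparrow}\!\left(\begin{smallmatrix}a&b\\0&d\end{smallmatrix}\right)$ sends $Q[X]=2x_1x_5+2x_2x_4-S[x_3]$ to $n(2x_1x_5+2x_2x_4)-S[x_3]$, which is not a scalar multiple of $Q[X]$: the $S$-block refuses to scale. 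The elements the paper actually uses, $\iota^{\uparrow}(\sqrt{n}^{-1}\left(\begin{smallmatrix}a&b\\0&d\end{smallmatrix}\right))$, are honest orthogonal transformations in $G(\R)$ with irrational entries. Once you use these, your formula $q((g^{-1}\mu)_v)=\tfrac1n\,q(\mu_{gv})$ becomes simply $q((g^{-1}\mu)_v)=q(\mu_{gv})$ with no factor of $n$, and the ``substitution $\mu=n\lambda$'' is both unnecessary and, as written, incoherent.

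Your second device, the swap automorphism exchanging the two hyperbolic planes, does exist (it is the coordinate permutation $x_2\leftrightarrow x_4$) and does fix each $\Theta_\alpha$, but it only converts the left side at $(z,w,z')$ into the right side at $(z',w,z)$: it proves the identity with $z$ and $z'$ interchanged, which is \emph{equivalent} to the theorem, not a proof of it. Nor can one match cosets term by term via ``$T_n={}^tT_n$'': the pullback $g_{a,b,d}^{-1}L$ couples coordinates $(1,4)$ and $(2,5)$, while $g'_{a,b,d}{}^{-1}L$ couples $(1,2)$ and $(4,5)$, so the individual lattices never agree. What \emph{does} work is a direct pointwise multiplicity count --- reducing to $n=p$ prime and checking, for each $\mu\in(\tfrac{1}{\sqrt p}\Z)^4\times(\alpha_0+L_0)$, that the number of $(a,b,d)\in\cH_p$ with $\mu\in g_{a,b,d}^{-1}(\alpha+L)$ equals the analogous count for $g'$ --- but this is precisely the ``main obstacle'' you flagged and left unresolved.

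The paper's route is quite different. It reduces to $n=p$ prime and then applies Borcherds' formula (Proposition~\ref{prop:BF}) to write $\Theta_\alpha(\tau,Z)$ as a sum over $(c,d)\in\Z^2$ of generalized theta functions $\theta_{\alpha_1}$ on the smaller lattice $L_1=\Z^{m+2}$ (one hyperbolic plane stripped off). The identity is then established at the level of each $(c,d)$-summand: by explicit expansion and a change of summation variable when $p\nmid c$ or $p\nmid d$, and by Poisson summation in one integer variable when $p\mid c$ and $p\mid d$ (Theorem~\ref{th:spin}).
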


 The notion of the additive symmetry was first introduced 
by the first named author
 in the Siegel modular case (\cite{H}).
In \cite{HeMu}, we defined the additive symmetry for automorphic forms 
on $O(2,m+2)$ and  showed  that the  Maass space is
 characterized
by the additive symmetry.

To state applications of Theorem \ref{th:A}, we recall the definition of
Heegner divisors.
For $\lambda\in V$, let $D(\lambda)$ be the divisor on $\cD$
corresponding to $\{v\in \Gr(L)\mid
Q(\lambda,v)=0\}$.
If $q(\lambda)<0$, $D(\lambda)$ is isomorphic to a hermitian
symmetric domain of type IV associated with $q|_{\lambda^{\perp}}$,
where $\lambda^{\perp}$ denotes the orthogonal complement of $\lambda$
in $V$ with respect to $q$.
For $\alpha\in L^*/L$ and $n\in q(\alpha)+\Z$ with $n<0$,
let
\[
 H(\alpha,n)=\sum_{\lambda\in \alpha+L,\,q(\lambda)=n}D(\lambda)
\]
be a divisor on $\cD$.
Then $H(\alpha,n)$ is invariant under the action of $\Gamma^*(L)=
\{\gamma\in G(\R)^+\mid \gamma L=L,\gamma|_{L^*/L}=\Id\}$,
and defines an algebraic divisor on $\Gamma^*(L)\bs\cD$.
Here  $G(\R)^+$ denotes the identity component of $G(\R)$.
The Heegner divisor $H(\alpha,n)$ plays an important role in the
arithmetic
of Shimura varieties attached to $G$ (for example see \cite{HZ},
\cite{GZ}, \cite{vdG}).

Borcherds (\cite{Bo2}) constructed a meromorphic automorphic form
on $\Gamma^*(L)$ whose divisor is a linear combination of Heegner
divisors with coefficients in $\Z$. Such an automorphic form is called
a \emph{Borcherds lift}.
A Borcherds lift $\Psi$ is obtained as, essentially,
the exponential of a \emph{regularized} integral of a weakly
holomorphic modular form (for a precise definition, see \textbf{5.3})
against the Siegel theta function.
Thus Theorem \ref{th:A} implies 
the following \emph{multiplicative symmetry} for  Borcherds lifts:

\begin{cor}
\label{cor:B}
 Let $\Psi$ be a  Borcherds lift. Then we have
\[
 \prod_{a,b,d}\Psi\left(\dfrac{az+b}{d},\dfrac{\sqrt{n}}{d}w,
z'\right)
=\epsilon_n(\Psi)
\,
\prod_{a,b,d}\Psi\left(z,\dfrac{\sqrt{n}}{d}w,\dfrac{az'+b}{d}\right)
\]
for any natural number $n$.
Here $(a,b,d)$ runs over 
$\cH_n$ and
 $\epsilon_n(\Psi)$ is a complex number of absolute value $1$
depending on $n$ and $\Psi$.
\end{cor}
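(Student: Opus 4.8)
The plan is to transport the additive symmetry of Theorem~\ref{th:A} through Borcherds' construction. Recall from \textbf{5.3} that a Borcherds lift $\Psi=\Psi(f)$ of weight $k$ is attached to a weakly holomorphic vector-valued modular form $f=(f_\alpha)_{\alpha\in L^*/L}$, and that $\Psi$ is linked to the regularized theta integral
\[
\Phi(Z,f)=\int_{\cF}^{\mathrm{reg}}\;\sum_{\alpha\in L^*/L}f_\alpha(\tau)\,\ol{\Theta_\alpha(\tau,Z)}\;d\mu(\tau)
\]
by a relation of the shape $\Phi(Z,f)=-c\,\log|\Psi(Z)|-c'\,\log N(Z)+c''$ on the complement of the Heegner divisors, with $c>0$ and $c',c''$ constants, where $N(Z)=2\,\Im z\,\Im z'-\tp\Im w\,S\,\Im w$ is the quantity defining $\cD$. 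For $(a,b,d)\in\cH_n$ put $\sigma^{\downarrow}_{a,b,d}(z,w,z')=\bigl(\tfrac{az+b}{d},\tfrac{\sqrt n}{d}w,z'\bigr)$ and $\sigma^{\uparrow}_{a,b,d}(z,w,z')=\bigl(z,\tfrac{\sqrt n}{d}w,\tfrac{az'+b}{d}\bigr)$. Using $ad=n$ one checks that both maps send $\cD$ into $\cD$ and rescale $N$ in exactly the same way:
\[
N\bigl(\sigma^{\downarrow}_{a,b,d}Z\bigr)=N\bigl(\sigma^{\uparrow}_{a,b,d}Z\bigr)=\frac{n}{d^2}\,N(Z);
\]
in particular $\Psi\circ\sigma^{\downarrow}_{a,b,d}$ and $\Psi\circ\sigma^{\uparrow}_{a,b,d}$ are well-defined meromorphic functions on $\cD$.

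Next I would feed Theorem~\ref{th:A} into $\Phi$. Since $\ol{\Theta_\alpha(\tau,v)}=\Theta_\alpha(-\ol\tau,v)$ (because $q(\lambda_v)$ and $q(\lambda_v^{\perp})$ are real) and $-\ol\tau\in\grH$ whenever $\tau\in\grH$, the identity of Theorem~\ref{th:A}, valid for every $\tau\in\grH$ and every point of $\cD$, holds verbatim for $\ol{\Theta_\alpha}$ in each component $\alpha$; multiplying by $f_\alpha(\tau)$ and summing over $\alpha$, it holds for the integrand above. As $\cH_n$ is finite, linearity of the regularization yields
\[
\sum_{(a,b,d)\in\cH_n}\Phi\bigl(\sigma^{\downarrow}_{a,b,d}Z,f\bigr)=\sum_{(a,b,d)\in\cH_n}\Phi\bigl(\sigma^{\uparrow}_{a,b,d}Z,f\bigr).
\]
Substituting the relation between $\Phi$ and $\Psi$ on both sides, the constants $c''$ (each occurring $|\cH_n|$ times) cancel, and by the displayed formula for $N$ the $\log N$-terms cancel term by term; what remains is $\sum_{(a,b,d)}\log|\Psi(\sigma^{\downarrow}_{a,b,d}Z)|=\sum_{(a,b,d)}\log|\Psi(\sigma^{\uparrow}_{a,b,d}Z)|$, i.e.
\[
\bigl|F^{\downarrow}(Z)\bigr|=\bigl|F^{\uparrow}(Z)\bigr|,\qquad F^{\downarrow}:=\prod_{(a,b,d)\in\cH_n}\Psi\circ\sigma^{\downarrow}_{a,b,d},\quad F^{\uparrow}:=\prod_{(a,b,d)\in\cH_n}\Psi\circ\sigma^{\uparrow}_{a,b,d},
\]
two meromorphic functions on $\cD$ that are not identically zero. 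The meromorphic function $g=F^{\downarrow}/F^{\uparrow}$ then has constant modulus $1$ off its zero and polar sets; near a pole $|g|$ would be unbounded and near a zero $|g|\to0$, so $g$ has neither, hence $g$ is holomorphic and non-vanishing on the connected domain $\cD$ with $|g|\equiv1$, and is therefore a constant $\epsilon_n(\Psi)$ with $|\epsilon_n(\Psi)|=1$ depending only on $n$ and $\Psi$. This is the asserted multiplicative symmetry.

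The step requiring care is the interchange of the sum over $\cH_n$ with Borcherds' regularization (a limit of truncated integrals together with analytic continuation in an auxiliary parameter). Because $\cH_n$ is \emph{finite} and Theorem~\ref{th:A} is an identity valid pointwise in $\tau$, this interchange is harmless: one simply regularizes each summand separately and then adds. It remains only to check that the substitutions $\sigma^{\downarrow}_{a,b,d}$ and $\sigma^{\uparrow}_{a,b,d}$ do not move $Z$ onto a Heegner divisor where $\Phi(\cdot,f)$ is undefined; since those divisors form a closed nowhere dense subset of $\cD$, the identity $|F^{\downarrow}|=|F^{\uparrow}|$ propagates there by continuity of the meromorphic functions involved. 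Everything else is the bookkeeping of the weight $k$ and the automorphy factor carried out above.
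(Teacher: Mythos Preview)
Your argument is correct and follows exactly the route the paper takes (Theorem~\ref{th:main-2}): pass the additive symmetry of $\Theta_\alpha$ through the regularized integral to get the additive symmetry of $\Phi(\cdot,f)$, then use Borcherds' identity $\log|\Psi_f(Z)|=-\tfrac14\Phi(Z,f)-\tfrac{c(0,0)}{2}\log Q_1[\Im(Z)]+\text{const}$ together with the equal rescaling of $Q_1[\Im(Z)]$ under the two families of substitutions to obtain $|F^{\uparrow}|=|F^{\downarrow}|$, hence a unimodular constant ratio. Your write-up simply makes explicit the cancellations and the constant-modulus argument that the paper leaves to the reader; note only that your arrows $\sigma^{\uparrow},\sigma^{\downarrow}$ are labeled opposite to the paper's $T^{\uparrow},T^{\downarrow}$, which is harmless.
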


Finally let $\Phi_{\alpha,n}$ denote the automorphic Green function 
associated with the Heegner divisor $H(\alpha,n)$, which has been
introduced and studied by Bruinier
(\cite{Br1}, \cite{Br2}) and Oda and Tsuzuki (\cite{OT}) independently
(for a precise
definition of $\Phi_{\alpha,n}$,
see \textbf{6.1}).
Bruinier (\cite{Br2}) showed that $\Phi_{\alpha,n}$ is obtained as 
a regularized integral of a certain Poincar\'{e} series against 
the Siegel theta function.
Thus, again, Theorem \ref{th:A} implies

\begin{cor}
 \label{cor:C}
The automorphic Green function
$\Phi_{\alpha,n}$ satisfies the additive symmetry.
\end{cor}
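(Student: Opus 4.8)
\noindent\textbf{Plan of proof of Corollary \ref{cor:C}.}
The plan is to deduce the additive symmetry of $\Phi_{\alpha,n}$ directly from Theorem \ref{th:A}, using Bruinier's realization of the automorphic Green function as a regularized theta integral. Recall from \cite{Br2} (see also \textbf{6.1}) that, up to a fixed normalization,
\[
\Phi_{\alpha,n}(z,w,z')=\int_{\SL_2(\Z)\bs\grH}^{reg}\Big\langle F_{\alpha,n}(\tau),\,\Theta_L\big(\tau,(z,w,z')\big)\Big\rangle\,d\mu(\tau),
\]
where $\Theta_L=\sum_{\beta\in L^*/L}\Theta_\beta\,\e_\beta$ is the $\C[L^*/L]$-valued Siegel theta function, $F_{\alpha,n}$ is the attached Maass--Poincar\'e series (a real-analytic modular form in the variable $\tau$ alone, with values in $\C[L^*/L]$), $d\mu$ is the invariant measure on $\grH$, and the superscript ``$reg$'' denotes the Harvey--Moore/Borcherds regularization (truncate the fundamental domain at $\Im\tau=t$, introduce an auxiliary complex parameter $s$, analytically continue, and take the constant term at $s=0$ as $t\to\infty$).

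First I would record that for every $(a,b,d)\in\cH_n$ the map $\sigma_{a,b,d}\colon(z,w,z')\mapsto\big(\tfrac{az+b}{d},\tfrac{\sqrt n}{d}\,w,z'\big)$ sends $\cD$ into $\cD$: since $ad=n$, one computes $2\,\Im\!\big(\tfrac{az+b}{d}\big)\Im(z')-\tp\Im\!\big(\tfrac{\sqrt n}{d}w\big)\,S\,\Im\!\big(\tfrac{\sqrt n}{d}w\big)=\tfrac{a}{d}\big(2\,\Im(z)\Im(z')-\tp\Im(w)\,S\,\Im(w)\big)>0$, and symmetrically for the substitution $\sigma'_{a,b,d}$ acting through $z'$. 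Hence both sides of the claimed identity, $\sum_{(a,b,d)\in\cH_n}\Phi_{\alpha,n}\circ\sigma_{a,b,d}$ and $\sum_{(a,b,d)\in\cH_n}\Phi_{\alpha,n}\circ\sigma'_{a,b,d}$, are well-defined on the complement in $\cD$ of the finite union of preimages of the Heegner divisor $H(\alpha,n)$ under these maps, and it is on this open set (equivalently, as an identity of Green currents) that the asserted equality is to be read.

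Next I would push the finite averaging sums through the regularized integral. Because each $\sigma_{a,b,d}$ and $\sigma'_{a,b,d}$ acts only on the $\cD$-variable whereas the regularization is carried out entirely in the $\tau$-variable (the cut-off height $t$ and the parameter $s$ are independent of $(z,w,z')$), and because $\cH_n$ is finite, the operations of summing over $\cH_n$, truncating, continuing in $s$, and taking the constant term commute with one another. This gives
\[
\sum_{(a,b,d)\in\cH_n}\Phi_{\alpha,n}\big(\sigma_{a,b,d}(z,w,z')\big)=\int_{\SL_2(\Z)\bs\grH}^{reg}\Big\langle F_{\alpha,n}(\tau),\sum_{(a,b,d)\in\cH_n}\Theta_L\big(\tau,\sigma_{a,b,d}(z,w,z')\big)\Big\rangle\,d\mu(\tau),
\]
and likewise with $\sigma'_{a,b,d}$ in place of $\sigma_{a,b,d}$. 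Applying Theorem \ref{th:A} to each component $\Theta_\beta$ of $\Theta_L$ separately yields $\sum_{(a,b,d)}\Theta_\beta\circ\sigma_{a,b,d}=\sum_{(a,b,d)}\Theta_\beta\circ\sigma'_{a,b,d}$, hence the same identity for $\Theta_L$; pairing with $F_{\alpha,n}$ and integrating, one concludes $\sum_{(a,b,d)}\Phi_{\alpha,n}\circ\sigma_{a,b,d}=\sum_{(a,b,d)}\Phi_{\alpha,n}\circ\sigma'_{a,b,d}$, which is precisely the additive symmetry for $\Phi_{\alpha,n}$ in the sense of \cite{HeMu}.

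The one point that genuinely needs care — and which I expect to be the main, if routine, obstacle — is the interchange of the finite sum with the regularization: one must verify that the regularized integral of $\big\langle F_{\alpha,n},\sum_{(a,b,d)}\Theta_L\circ\sigma_{a,b,d}\big\rangle$ exists in the regularized sense and equals the sum of the individual regularized integrals. This should follow because each translated pairing is again of the standard shape to which the Harvey--Moore procedure applies: the growth of $\Theta_L(\tau,v)$ as $\Im\tau\to\infty$ is controlled uniformly for $v$ in compact subsets of $\cD$, and the dependence on $v$ near a Heegner divisor affects only a term of controlled order, so the finite sum introduces no new divergence. Keeping track of these logarithmic singularities along the translated Heegner divisors then pins down exactly the domain on which the final identity holds.
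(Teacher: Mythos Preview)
Your proposal is correct and follows essentially the same route as the paper: both arguments invoke Bruinier's integral representation of $\Phi_{\alpha,n}$ as a (regularized) pairing of a Poincar\'e series $F_{\alpha,n}$ against the vector-valued Siegel theta function $\Theta_L$, and then transfer the additive symmetry of $\Theta_L$ (Theorem~\ref{th:A}) through the integral. The paper's proof is terser---it simply states that the additive symmetry ``follows directly'' from that of $\Theta_L$---whereas you spell out the domain check for the maps $\sigma_{a,b,d}$ and the justification for commuting the finite $\cH_n$-sum with the regularization; these additions are welcome but do not constitute a different method.
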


\noindent
\textbf{Remark.}\quad 
The above results (Corollaries \ref{cor:B} and \ref{cor:C})
suggest that certain symmetries should hold for Heegner divisors.
We hope to study these symmetries in future. 

\bigskip

\noindent
\textbf{Remark.}\quad 
 In this paper, we 
assume that the Witt index of $q$ is equal to $2$ and hence
 the $\Q$-rank of $G$ is 
equal to $2$.
In the forthcoming paper, we will treat the Hilbert modular case 
($G=O(2,2)$ of $\Q$-rank $1$), which contains some special features.

\subsection{The organization of the paper}
The paper is organized as follows.
In Section 2, after recalling the definition of automorphic forms
on $O(2,m+2)$,
we introduce the notions of the additive symmetry  and the 
multiplicative symmetry.
In Section 3, we recall the definition of Siegel theta functions and
state  the main result of the paper (Theorem \ref{th:main-1}),
the additive symmetry for  Siegel theta functions.
The definition of Heegner divisors is recalled in Section 4.
In Section 5 and 6, we prove the multiplicative (respectively additive)
symmetry for Borcherds lifts (respectively automorphic Green functions)
assuming Theorem \ref{th:main-1}.
Section 7  is devoted to the proof of Theorem \ref{th:main-1},
which  relies on a formula due to Borcherds (Proposition \ref{prop:BF})
and  Poisson summation formula.
In the final section, we study several examples of Siegel
modular forms of degree 2. In particular, as an application of the
multiplicative symmetry for Borcherds lifts,
we show that a holomorphic
Siegel cusp form of weight $12$ on $\mathrm{Sp}_2(\Z)$, which
is unique up to constant multiples, is \emph{not} a 
Borcherds lift.

\subsection{Notation}
As usual, we denote by $\NN, \Z, \Q, \R$ and $\C$
the set of natural numbers, the ring of rational integers,
the field of  rational numbers, the field of  real numbers and
the field of  complex numbers respectively.
For a real symmetric matrix $T$ of degree $n$, let
$T(x,y)=\tp xTy$ and $T[x]=T(x,x)$ for $x,y\in \C^n$.
We put $\e(z)=\exp(2\pi i z)$ for $z\in \C$.
Denote by $\delta_{ij}$  the Kronecker's delta.
For a condition $C$, we put 
\[
 \delta(C)=
\begin{cases}
 1 & \text{if $C$ holds},\\
 0 & \text{otherwise}.
\end{cases}
\]

\section{Automorphic forms on $O(2,m+2)$ and the symmetry}
Throughout the paper, we fix a positive definite even integral 
symmetric matrix $S$ of degree $m\ge 0$. For $\nu\in \Z_{\ge 0}$, put
\[
 Q_{\nu}=\Smat{0&0&J_{\nu}}{0&-S&0}{J_{\nu}&0&0},
\]
where $J_{\nu}=\left(\delta_{i,\nu-j+1}\right)_{1\le i,j\le \nu}\in \GL_{\nu}$.
Then $Q_{\nu}$ is an even integral symmetric matrix of
signature  $(\nu,m+\nu)$.
Let
$L_{\nu}=\Z^{m+2\nu}, L_{\nu}^*=Q_{\nu}\inv L_{\nu},
V_{\nu}=L_{\nu}\otimes_{\Z}\R=\R^{m+2\nu}$ and
$V_{\nu,\C}=V_{\nu}\otimes_{\R}
\C=\C^{m+2\nu}$.
We often write $Q, L, L^*$ and $V$ for $Q_{2},L_{2},L^*_{2}$ and $V_{2}$
respectively.
Thus
\[
 Q=
\left(
\begin{matrix}
 &&&&1 \\
&&&1&\\
&&-S&&\\
&1&&&\\
1&&&&
\end{matrix}
\right),\ L=\Z^{m+4}, \,L^*=Q\inv L,\,V=\R^{m+4}.
\]
We often write $(x_1,x_2,x_3)$ for 
\[
 \vecm{x_1\\x_2\\x_3}\in V_{1,\C}\qquad
(x_1,x_3\in\C,x_2\in V_{0,\C}=\C^m)
\]
if there is no fear of confusion.
Let $q$ be a quadratic form on $V$ defined by
$q(X)=2\inv Q[X]$
for $X\in V$.

Denote by $G=O(Q)$  the orthogonal group of $Q$.
Let $G(\R)^+$ be the  identity component of $G(\R)$
 and
\[
 \cD=\left\{Z=
(z,w,z')\in V_{1,\C}
\mid z,z'\in \grH,w\in \C^m, Q_1[\Im(Z)]=2\,\Im(z)\Im(z')-S[\Im(w)]>0
\right\}.
\]
As is well-known, $\cD$ is a hermitian symmetric domain of type IV.
Define an action of $G(\R)^+$ on $\cD$ and an automorphic factor
$J\colon G(\R)^+ \times \cD\to \C\x$ as follows:
For $Z\in \cD$, put
\[
 \widetilde Z=\vecm{-Q_1[Z]/2 \\ Z \\ 1}\in V_{\C}.
\]
Note that $Q[\widetilde Z]=0$ and $Q(\widetilde Z,\ol{\widetilde Z})=2
Q_1[\Im(Z)]>0$.
For $g\in G(\R)^+$ and $Z\in \cD$, we define $g\act{Z}\in \cD$ and
$J(g,Z)\in \C\cross$ by
\[
 g \widetilde Z=\widetilde{g\act{Z}}\, J(g,Z).
\]
Let $k$ be an integer  and $F$  a function on $\cD$.
For $g\in G(\R)^+$, we define the Petersson slash operator by
$(F|_k g)(Z)=J(g,Z)^{-k}F(g\act{Z})$.

Let
$\Gamma(L)=\{\gamma\in G(\R)^+\mid \gamma L=L\}$.
For an index finite subgroup $\Gamma$ of $\Gamma(L)$,
a unitary character $\chi$ of $\Gamma$ and $k\in\Z$, 
denote by $\cC_k(\Gamma,\chi)$  the space of
smooth functions $F$ on $\cD$ satisfying $F|_k\gamma=\chi(\gamma)F$
for any $\gamma\in\Gamma$.

Define two embeddings of $\SL_2$ into $G$ by
\begin{align*}
 \iota\up(h)&=\left(
\begin{matrix}
 a&  &  &-b& \\
  & a&  &  & b\\
  &  &1_m& & \\
-c&  &   & d&\\
  &c &   &  &d
\end{matrix}
\right),\quad
\iota\down(h)=
\left(
\begin{matrix}
 a& -b &  && \\
-c  & d&  &  & \\
  &  &1_m& & \\
&  &   & a&b\\
  & &   & c &d
\end{matrix}
\right)
\end{align*}
for $h=\Nmat{a&b}{c&d}\in\SL_2$.
These embeddings commute with each other, and $\iota\up(\SL_2(\R))$ and
$\iota\down(\SL_2(\R))$ are contained in $G(\R)^+$.

For $F\in \cC_k(\Gamma,\chi)$ and  $n\in \NN$, we 
define
\begin{align*}
 F|_k T\up_n(Z)&=n^{k/2-1}
\sum_{a,b,d} F|_k\iota\up\left(\sqrt{n}\inv \Nmat{a&b}{0&d}\right)(Z)\\
&=\dfrac{1}{n}\,
\sum_{a,b,d}\left(\dfrac{n}{d}\right)^k
F\left(\dfrac{az+b}{d},
\dfrac{\sqrt{n}}{d}w,
z'\right),\\
 F|_k T\down_n(Z)&=n^{k/2-1}
\sum_{a,b,d} F|_k\iota\down\left(\sqrt{n}\inv \Nmat{a&b}{0&d}\right)(Z)\\
&=\dfrac{1}{n}\,
\sum_{a,b,d}\left(\dfrac{n}{d}\right)^k
F\left(\tau,\left(z,
\dfrac{\sqrt{n}}{d}w,
\dfrac{az'+b}{d}\right)\right),
\end{align*}
where $Z=(z,w,z')\in\cD$ and
 $(a,b,d)$ runs over 
\[
 \cH_n=\{(a,b,d)\in\Z^3 \mid a,d>0,ad=n,0\le b<d\}.
\]
Note that $F|_kT\up_n$ and $F|_kT\down_n$ are not  in
$\cC_k(\Gamma,\chi)$ in general.
For a prime $p$, we have
\[
 F|_k T\up_p(z,w,z')=p^{k-1}F\left(pz,\sqrt{p}w,z' \right)+
  p^{-1}\sum_{a=0}^{p-1}F\left(p\inv(z+a),\sqrt{p}\inv w,z'\right)
\]
and
\[
 F|_k T\down_p\left(z,w,z'\right)=
p^{k-1}F\left(z, \sqrt{p}w,pz'\right)+
  p^{-1}\sum_{a=0}^{p-1}F\left(z, \sqrt{p}\inv w,p\inv(z'+a)\right).
\]

We say that $F\in \cC_k(\Gamma,\chi)$ satisfies the \emph{additive symmetry}
if the equality
\begin{equation}
\label{eq:as}
  F|_kT\up_n=F|_kT\down_n
\end{equation}
holds for any $n\ge 1$.
It is easily seen that 
$F$ satisfies the additive symmetry
if and only if $F$ satisfies (\ref{eq:as}) for $n=p$, $p$ any prime number.
 
We also define
\begin{align*}
 F|\cT\up_n(Z)&=
\prod_{a,b,d} 
F\left(\iota\up\left(\sqrt{n}\inv \Nmat{a&b}{0&d}\right)\act{Z}\right)\\
&=
\prod_{a,b,d}
F\left(\dfrac{az+b}{d},
\dfrac{\sqrt{n}}{d}w,
z'\right),\\
 F|\cT\down_n(Z)&=
\prod_{a,b,d}
F\left(\iota\down\left(\sqrt{n}\inv \Nmat{a&b}{0&d}\right)\act{Z}\right)\\
&=
\prod_{a,b,d}
F\left(z,
\dfrac{\sqrt{n}}{d}w,
\dfrac{az'+b}{d}\right),
\end{align*}
where $Z=(z,w,z')\in\cD$ and $(a,b,d)$ runs over $\cH_n$.
We say that $F$ satisfies the \emph{multiplicative symmetry}
if 
\begin{equation}
\label{eq:ms}
  F|\cT_n\up=\epsilon_{n}(F)\, F|\cT_n\down
\end{equation}
holds for any $n\ge 1$ with  a complex number  $\epsilon_{n}(F)$
of absolute value $1$ depending on $n$ and $F$.
Similarly as above, 
$F$ satisfies the multiplicative symmetry
if and only if $F$ satisfies (\ref{eq:ms}) for $n=p$, $p$ any prime number.


\section{Siegel theta functions}

\subsection{The Grassmannian}

Recall that the Grassmannian $\Gr(L)$ of $L$ is
 the set 
 consisting of $2$-dimensional subspaces of $V$ on
which
$q$ is positive definite.
Then $\Gr(L)$ is  a hermitian symmetric domain of type IV
on which $G(\R)^+$ acts  transitively in a natural manner,
and 
isomorphic to $\cD$.
The isomorphism from $\cD$ to $\Gr(L)$ is given by
$Z\in\cD\mapsto v_Z$,
the subspace of $V$ generated by $\Re(\widetilde Z)$
and $\Im(\widetilde Z)$.

\subsection{Siegel theta functions}

For $\lambda\in V$ and $v\in \Gr(L)$, let $\lambda_v$ and $\lambda_{v}^{\perp}$
be the projections of $\lambda$ to $v$ and $v^{\perp}$ respectively,
where $v^{\perp}$ denotes the orthogonal complement
of  $v$ in $V$
with respect to $q$.
Then $q(\lambda)=q(\lambda_v)+q(\lambda_{v}^{\perp})$. 
For $\alpha\in L^*/L$, 
the Siegel theta function $\Theta_{\alpha}$ is defined by
\begin{align}
 \Theta_{\alpha}(\tau,v)
&=\sum_{\lambda\in \alpha+L}\,\e
\left(\tau q(\lambda_v)+\ol{\tau}q(\lambda_{v}^{\perp})\right)\\
&\notag =\sum_{\lambda\in \alpha+L}\,\e
\left(i\,\Im(\tau)Q[\lambda_v]+\dfrac{\ol{\tau}}{2}\,Q[\lambda]\right)
\end{align}
for $\tau\in \grH$ and $v\in \Gr(L)$.
As a function of $v$, $\Theta_{\alpha}(\tau,v)$ is invariant under 
\begin{equation}
 \Gamma^*(L)=\{\gamma\in \Gamma(L)\mid \gamma \lambda\equiv
\lambda\pmod{L}
\text{ for any }\lambda\in L^*
\},
\end{equation}
the discriminant group of $L$.
For the automorphy of $\Theta_{\alpha}(\tau,v)$ with respect to $\tau$,
see \textbf{5.2}.
By abuse of notation, we often write $\Theta_{\alpha}(\tau,Z)$ for 
$\Theta_{\alpha}(\tau,v_Z)$.
Note that $\Theta_{\alpha}(\tau,Z)\in \cC_0(\Gamma^*(L),\mathbf{1})$ as a
function of $Z\in \cD$,
where $\mathbf{1}$ stands for the trivial character of $\Gamma^*(L)$.
We easily see that
\[
 \Theta_{\alpha}(\tau,Z)=\sum_{\lambda\in \alpha+L}
\e\left(i\,\Im(\tau)\dfrac{|Q(\lambda,\widetilde Z)|^2}{Q_1[\Im(Z)]}+
\dfrac{\ol{\tau}}{2}\,Q[\lambda]\right).
\]
The main result of the paper is stated as follows.

\begin{theorem}
\label{th:main-1}
 Let $\alpha\in L^*/L$ and $\tau\in \grH$.
Then $Z\mapsto \Theta_{\alpha}(\tau,Z)$ satisfies the additive
 symmetry. 
Namely, for any $n\in\NN$, we have
\[
 \sum_{a,b,d}\Theta_{\alpha}\left(\tau,\left(\dfrac{az+b}{d},
\dfrac{\sqrt{n}}{d}w,
z'\right)\right)
=\sum_{a,b,d}\Theta_{\alpha}\left(\tau,\left(z,\dfrac{\sqrt{n}}{d}w,
\dfrac{az'+b}{d}\right)\right),
\]
where $(a,b,d)$ runs over $\cH_n$.
\end{theorem}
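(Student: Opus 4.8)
The plan is to reduce the identity to the case $n = p$ prime (as noted in the text, the additive symmetry follows from its validity at all primes), and then to prove the prime case by an explicit computation combining the Borcherds splitting formula (Proposition~\ref{prop:BF}) with Poisson summation. First I would use the formula of Borcherds to rewrite $\Theta_\alpha(\tau, Z)$ in terms of the lattice $L_1 = \Z^{m+2}$ obtained by splitting off the hyperbolic plane corresponding to the first/last coordinates of $Q = Q_2$; this expresses $\Theta_\alpha$ as a sum over $c_1, c_2 \in \Z$ of a Gaussian in the variables $z, z'$ (the two tube-domain parameters attached to the two hyperbolic planes) times a lower-rank Siegel theta function $\Theta^{L_1}_{\alpha'}(\tau, \cdot)$ in the $w$-variable. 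The key point of this step is that the action $z \mapsto (az+b)/d$, $w \mapsto (\sqrt n/d)\, w$ on the left and $z' \mapsto (az'+b)/d$, $w \mapsto (\sqrt n / d)\, w$ on the right acts only on the "hyperbolic-plane" variables and rescales $w$, so after the splitting the $w$-dependence enters only through the scalar $n/d^2 = a/d$ multiplying $Q[\lambda_1]$ inside $\Theta^{L_1}$.

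Next I would perform the sum over $(a,b,d) \in \cH_p$ explicitly: for $p$ prime this set is $\{(1,b,p) : 0 \le b < p\} \cup \{(p,0,1)\}$, so the left-hand side becomes
\[
p^{-1}\sum_{b=0}^{p-1}\Theta_\alpha\!\left(\tau,\left(\tfrac{z+b}{p},\tfrac{w}{\sqrt p},z'\right)\right)\cdot p + \Theta_\alpha\!\left(\tau,\left(pz,\sqrt p\, w, z'\right)\right)
\]
up to the normalizing factors, and similarly with $z, z'$ swapped on the right. In the Borcherds-split form, the sum $\sum_{b=0}^{p-1} \e(b\cdot(\text{integer}/p))$ over the translates $z \mapsto (z+b)/p$ picks out, by orthogonality, the terms of the Gaussian lattice sum in which the relevant coordinate is divisible by $p$; dually, the term coming from $(p,0,1)$ contributes a Gaussian at scale $pz$. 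Carrying out Poisson summation in the rank-one direction attached to $z$ (respectively $z'$) converts the "divisible by $p$" condition and the "scale $pz$" term into a single symmetric expression. Concretely, both sides should collapse to the same sum over $\lambda$ of $\e(\cdots)$ weighted by a factor that depends on $\lambda$ only through $Q[\lambda]$, $Q[\lambda_v]$ and the $p$-divisibility of the two hyperbolic coordinates, and this factor is manifestly symmetric under interchanging the role of the two hyperbolic planes (equivalently, $z \leftrightarrow z'$) once $w$ has been rescaled by $1/\sqrt p$ or $\sqrt p$ consistently on both sides.

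The main obstacle I expect is bookkeeping the theta-multiplier and the Gaussian normalizations through the Poisson summation step: one must track how the quadratic form on the rank-one sublattice transforms, ensure the resulting "metaplectic" phase factors on the two sides agree, and verify that the rescaling $w \mapsto (\sqrt n/d) w$ is exactly what is needed to match the $w$-part of $\Theta^{L_1}$ on the two sides (this is where the specific shape of $Q$, with $-S$ sandwiched between two hyperbolic planes, is used). A secondary technical point is uniformity/absolute convergence: since $\Theta_\alpha(\tau, \cdot)$ converges absolutely and locally uniformly for $\tau \in \grH$, all rearrangements, the finite sum over $b$, and the Poisson summation are justified, so I would state convergence once at the outset and then manipulate freely. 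Once the prime case is checked, the general $n$ follows formally, or alternatively one can redo the same computation with the full Hecke set $\cH_n$ using that $\sum_{(a,b,d)\in\cH_n}$ factors multiplicatively in $n$ and matches the structure of the Gaussian lattice sum; I would present the prime case in detail and indicate the reduction.
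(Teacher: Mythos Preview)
Your high-level plan---reduce to $n=p$ prime, apply Borcherds' splitting formula (Proposition~\ref{prop:BF}), then use Poisson summation---is exactly the paper's strategy. However, two inaccuracies in your sketch would obstruct a clean execution.

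First, Borcherds' formula splits off \emph{one} hyperbolic plane, so the lower-rank theta is $\theta_{\alpha_1}(\tau,Y_Z;dX_Z,-cX_Z)$ for $L_1=\Z^{m+2}$, which still depends nontrivially on $z$ and $z'$ through $X_Z=(x,X_0,x')$ and $Y_Z=(y,Y_0,y')$; it is not a theta in the $w$-variable alone, and the $w$-dependence is not captured by a single scalar factor. Second, and more importantly, the matching of the two sides is \emph{not} by a manifest $z\leftrightarrow z'$ symmetry after one Poisson step. The paper instead organizes the terms as $I^{\uparrow}_{\pm}(c,d)$ and $I^{\downarrow}_{\pm}(c,d)$ (with $(c,d)\in\Z^2$ the summation variables in Proposition~\ref{prop:BF}) and proves a case split (Theorem~\ref{th:spin}): when $p\nmid c$ or $p\nmid d$ one has $I^{\uparrow}_{-}(c,d)=I^{\downarrow}_{-}(c,d)$ by a direct change of summation variable in the $L_1$-sum, whereas when $p\mid c$ and $p\mid d$ one has the \emph{cross}-identity $I^{\uparrow}_{-}(c,d)=p\,I^{\downarrow}_{+}(c/p,d/p)$ (and symmetrically), and it is only here that Poisson summation in the $l'$ (resp.\ $l$) direction of Lemma~\ref{lem:gstf-modified}(ii) is invoked. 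The two sides of~\eqref{eq:reduced} then match by reassembling these pieces. So the Poisson step does not produce a globally symmetric expression; its role is to convert the ``minus'' contribution on one side into the ``plus'' contribution on the other in the $p\mid c$, $p\mid d$ case. Your sketch anticipates the right tools but misses this cross-matching structure, which is the actual content of the argument.
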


We postpone the proof of the theorem until Section 7.

\section{Heegner divisors}

The quotient $\cX_L=\Gamma^*(L)\bs \cD$
is a quasi-projective algebraic variety over $\C$ of dimension $m+2$.
For $\lambda\in V$ with $q(\lambda)<0$, let
\[
 D(\lambda)=\{Z\in \cD\mid Q(\lambda,\widetilde Z)=0\}
\]
be a complex analytic divisor on $\cD$.
For $\alpha\in L^*/L$ and $n\in q(\alpha)+\Z$ with $n<0$,
define
\[
 H(\alpha,n)=\sum_{\lambda\in L+\alpha,\,q(\alpha)=n}
D(\lambda).
\]
Then $H(\alpha,n)$ is a $\Gamma^*(L)$-invariant divisor on $\cD$,
called the \emph{Heegner divisor} of discriminant $(\alpha,n)$.
It is known that $H(\alpha,n)$ is the inverse image under the canonical
projection of an algebraic divisor on $\cX_L$, also denoted by
$H(\alpha,n)$ (for example see \cite{Br2}, \textbf{2.2}).


\section{Borcherds lifts}

In this section, we recall the definition of Borcherds lifts after \cite{Bo2}
and \cite{Br2},
and prove the multiplicative symmetry for Borcherds lifts
assuming Theorem \ref{th:main-1}.

\subsection{Metaplectic representations}
Let $\Mp_2(\R)$ be the metaplectic group. By definition,
$\Mp_2(\R)$ consists of $(M,\phi)$, where $M\in \SL_2(\R)$ and 
$\phi$ is a holomorphic function on $\grH$ satisfying
$\phi(\tau)^2=j(M,\tau)$,
and the product is given by
$(M_1,\phi_1(\tau))(M_2,\phi_2(\tau))
=(M_1M_2,\phi_1(M_2\act{\tau})\,\phi_2(\tau))$.
Here
\[
 M\act{\tau}=\dfrac{a\tau+b}{c\tau+d}\,,\ j(M,\tau)=c\tau+d\qquad
\left(M=\Nmat{a&b}{c&d}\in\SL_2(\R),\tau\in\grH\right)
\]
as usual.
Let $\Mp_2(\Z)$ be the inverse image of $\SL_2(\Z)$ under the natural
projection $\Mp_2(\R)\to \SL_2(\R)$.
It is known that $\Mp_2(\Z)$ is generated by
\[
 T=\left(\Nmat{1&1}{0&1},1\right),\ 
 S=\left(\Nmat{0&-1}{1&0},\sqrt{\tau}\right).
\]

Let $\{\gre_{\alpha}\}_{\alpha\in L^*/L}$ be the standard basis of the 
group ring $\C[L^*/L]$ with
$\gre_{\alpha}\gre_{\alpha'}=\gre_{\alpha+\alpha'}$.
Let $\form{}{}$ denote the inner product on $\C[L^*/L]$
defined by
$\form{\sum_{\alpha}x_{\alpha}\gre_{\alpha}}{\sum_{\alpha}y_{\alpha}\gre_{\alpha}}=\sum_{\alpha}x_{\alpha}\ol{y_{\alpha}}$,
where $\alpha$ runs over $L^*/L$.
There exists a unitary representation $\rho_L$ of $\Mp_2(\Z)$
on $\C[L^*/L]$ defined by
\begin{align*}
 \rho_L(T)\gre_{\alpha}&=\e(q(\alpha))\gre_{\alpha},\\
 \rho_L(S)\gre_{\alpha}&=\dfrac{\exp\left(\dfrac{\pi i}{4}m\right)}
 {\sqrt{|L^*/L|}}
                      \sum_{\beta\in L^*/L}\e(-Q(\alpha,\beta))\gre_{\beta}
\end{align*}
(see \cite{Bo2}, \S 2 and \cite{Br2}, \textbf{1.1}).

For a function $f$ on $\grH$ with values in $\C[L^*/L]$, $k\in 2\inv \Z$ and 
$(M,\phi)\in\Mp_2(\Z)$, we put
\[
 f|_k(M,\phi)(\tau)=\phi(\tau)^{-2k}\rho_L(M,\phi)\inv f(M\act{\tau}).
\]

\subsection{Automorphy of Siegel theta functions}
Set
\begin{equation}
 \label{eq:Theta}
\Theta_L(\tau,Z)=\sum_{\alpha\in L^*/L}\gre_{\alpha}\Theta_{\alpha}(\tau,Z).
\end{equation}
For $(M,\phi)\in \Mp_2(\Z)$, we have
\begin{equation}
 \label{eq:Theta-transformation}
 \Theta_L(M\act{\tau},Z)
=\phi(\tau)^2\,\ol{\phi(\tau)}^{m+2}\rho_L(M,\phi)\Theta_L(\tau,Z)
\end{equation}
(see \cite{Bo2}, Theorem 4.1).

\subsection{Weakly holomorphic modular forms}
For $k\in 2\inv \Z$, let $\cM_k(\rho_L)$ be 
 the space of
holomorphic
functions $f$ on $\grH$ with values in $\C[L^*/L]$ satisfying
the following two conditions:
\begin{enumerate}
 \item For $(M,\phi) \in \Mp_2(\Z)$, we
       have $f|_k(M,\phi)=f$.
 \item Let
\[
 f(\tau)=\sum_{\alpha\in L^*/L,\, n\in
       q(\alpha)+\Z}
        \,c(\alpha,n)
       \gre_{\alpha}(n\tau)
\]
be the   Fourier expansion of $f$, where we put
$\gre_{\alpha}(z)=\e(z)\,\gre_{\alpha}$ for $z\in\C$.
 For every
$\alpha\in L^*/L$, we have
 $c(\alpha,n)=0$ if $n\ll 0$.
\end{enumerate}
We call $\cM_k(\rho_L)$
the space of
\emph{weakly holomorphic modular forms}
of weight $k$ with respect to $\rho_L$.

\subsection{}
Let $f\in
\cM_{-m/2}(\rho_L)$.
For $N> 1$ and $s\in \C$, we put
\begin{equation}
\label{eq:Phi-N-s}
 \Phi_N(Z,f,s)=
\int_{F_N}\form{f(\tau)}{\Theta_L(\tau,Z)}\,
\Im(\tau)^{-1-s}d\Re(\tau)d\Im(\tau)
\qquad (Z\in \cD),
\end{equation}
where $F_N=\{\tau\in \grH\mid |\Re(\tau)|\le 1/2,1\le |\tau| \le N\}$.
Borcherds (\cite{Bo2}) showed that the limit 
\begin{equation}
\label{eq:Phi-s}
 \Phi(Z,f,s)=\lim_{N\to\infty}\Phi_N(Z,f,s)
\end{equation}
exists
if $\Re(s)$ is sufficiently large, and $\Phi(Z,f,s)$ is continued to
a meromorphic function of $s$ in $\C$.
Denote by $\Phi(Z,f)$ the constant term of the Laurent expansion  of
$\Phi(Z,f,s)$ at $s=0$.
The following fundamental 
result is due to Borcherds (\cite{Bo2}, Theorem 13.3; see
also  \cite{Br2}, \textbf{3.4}).

\begin{theorem}
\label{th:Borcherds}
 Let $f\in \cM_{-m/2}(\rho_L)$ and suppose that
\begin{equation}
\label{eq:input-data}
 c(\alpha,n)\in \Z\qquad \text{for }\alpha\in L^*/L \text{ and }n\le 0.
\end{equation}
Then there exists a meromorphic  automorphic form $\Psi_f(Z)$ on 
$\Gamma^*(L)$ of weight $c(0,0)/2$ and some multiplier system of
 $\Gamma^*(L)$ of finite order
satisfying the following properties.
\begin{enumerate}
 \item The divisor of  $\Psi_f(Z)$ is given by
\[
 \dfrac{1}{2} \sum_{\alpha\in L^*/L}
\sum_{n\in q(\alpha)+\Z,\,n<0}
  c(\alpha,n)H(\alpha,n).
\]
Here  the multiplicities  of $H(\alpha,n)$ are $2$ (respectively $1$)
if $2\alpha=0$ (respectively if $2\alpha\ne 0$) in $L^*/L$.

 \item We have
\[
 \log |\Psi_f(Z)|=-\dfrac{1}{4}\Phi(Z,f)-\dfrac{c(0,0)}{2}
\left(\log Q_1[\Im(Z)]+\Gamma'(1)/2+\log \sqrt{2\pi}\right).
\]
 \item For each Weyl chamber $W$ with respect to $f$, 
the product expansion
\[
 \Psi_f(Z)=C\,\e(Q_1(Z,\rho_f(W)))\times
\prod_{\lambda\in L_1^*,(\lambda,W)>0}\left(1-\e(Q_1(\lambda,Z))\right)
^{c(\widehat \lambda,Q_1[\lambda]/2)}
\]
holds if $Q_1[\Im(Z)]\gg 0$ and $\Im(Z)/\sqrt{Q_1[\Im(Z)]}\in W$.
Here $C$ is a constant of absolute value $1$,
$\rho_f(W)\in V_1$ is the Weyl vector attached to $(f,W)$
and $\widehat \lambda$ is the element of $L^*/L$ whose natural
projection to $L_1^*/L_1$ is $\lambda+L_1$.
(For the definitions of Weyl chambers and Weyl vectors, see \cite{Bo2}.)
\end{enumerate}

\end{theorem}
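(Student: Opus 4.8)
The plan is to construct $\Psi_f$ directly from the regularized theta integral $\Phi(Z,f)$, taking property (ii) as the defining relation and then extracting (i) and (iii) from a careful analysis of $\Phi$. Concretely, I would first study the function $\Phi(Z,f,s)$ of (\ref{eq:Phi-N-s})--(\ref{eq:Phi-s}) and its constant term $\Phi(Z,f)$ at $s=0$ as a function on $\cD$. The structural facts I would establish are: $\Phi(Z,f)$ is real-analytic on the complement of the Heegner divisors; it is $\Gamma^*(L)$-invariant (immediate from the invariance of $\Theta_L(\tau,Z)$ in $Z$ recorded in \textbf{5.2}); and it has a precise logarithmic singularity of strength $c(\alpha,n)$ along each $H(\alpha,n)$ with $n<0$. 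The singularity analysis comes from isolating, for a fixed $\lambda\in\alpha+L$ with $q(\lambda)=n<0$, the single term of $\Theta_\alpha$ whose exponent degenerates as $Z$ approaches $D(\lambda)$, and integrating it against $f$ over the cuspidal region $\Im(\tau)\to\infty$.

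Next I would upgrade the invariant real-analytic function $\Phi(Z,f)$ to a holomorphic object. Setting $G(Z)=-\tfrac14\Phi(Z,f)-\tfrac{c(0,0)}{2}\left(\log Q_1[\Im(Z)]+\Gamma'(1)/2+\log\sqrt{2\pi}\right)$, I would show that $G$ is pluriharmonic away from the Heegner divisors, i.e. $\partial\bar\partial G=0$ there. This is the heart of why a holomorphic $\Psi_f$ exists: on a simply connected singularity-free chart a pluriharmonic function is the real part of a holomorphic function, so $G=\log|\Psi_f|$ for some nowhere-zero holomorphic $\Psi_f$, determined up to a constant of modulus $1$. The pluriharmonicity should follow from applying the relevant second-order operator in $Z$ to $\Theta_L(\tau,Z)$ inside the integral and recognizing the result as a total $\tau$-derivative, so that after the regularized integration only boundary contributions (absorbed into the $\log Q_1[\Im(Z)]$ correction) survive. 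Along $H(\alpha,n)$ the logarithmic singularity of $\Phi$ translates into a zero or pole of $\Psi_f$ of order $\tfrac12 c(\alpha,n)$, with the multiplicity doubled exactly when $2\alpha=0$ because then $\lambda$ and $-\lambda$ contribute the same divisor $D(\lambda)$; this yields (i).

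The decisive computation is the Fourier expansion of $\Phi(Z,f)$ near a $0$-dimensional cusp, which simultaneously produces the product expansion (iii) and confirms the weight in (ii). I would choose a primitive isotropic vector and split $L$ off a hyperbolic plane, reducing the signature $(2,m+2)$ datum to a positive Grassmannian for the rank-one sublattice $L_1$ of signature $(1,m+1)$; this is where the vectors $\lambda\in L_1^*$ and the pairings $Q_1(\lambda,Z)$ appearing in (iii) enter. Applying Poisson summation in the two split-off coordinates converts $\Theta_L$ into a sum over $L_1$ that can be unfolded against $f$, turning the integral over the truncated domain $F_N$ into an integral over a strip; one then analytically continues in $s$ and extracts the constant term at $s=0$. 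The Weyl-chamber dependence arises because the sign conditions $(\lambda,W)>0$ govern which exponentials survive in the resulting geometric-type sums, and exponentiating the Fourier expansion of $G$ reproduces $C\,\e(Q_1(Z,\rho_f(W)))\prod_{\lambda\in L_1^*,(\lambda,W)>0}(1-\e(Q_1(\lambda,Z)))^{c(\widehat\lambda,Q_1[\lambda]/2)}$, with the Weyl vector $\rho_f(W)$ emerging as the regularized contribution of the constant and linear terms.

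Finally, automorphy follows by combining the $\Gamma^*(L)$-invariance of $\Phi(Z,f)$ with the transformation $Q_1[\Im(\gamma\act{Z})]=|J(\gamma,Z)|^{-2}Q_1[\Im(Z)]$ of the remaining factor in (ii): this forces $|\Psi_f(\gamma\act{Z})|=|J(\gamma,Z)|^{c(0,0)/2}|\Psi_f(Z)|$, so that $\Psi_f|_{c(0,0)/2}\gamma$ and $\Psi_f$ agree up to a locally constant unimodular factor, i.e. a multiplier system; its finite order is forced by the integrality (\ref{eq:input-data}) of the Fourier coefficients through the product expansion. The main obstacle, and the genuinely technical core, is the unfolding step of the third paragraph: the regularized integral must be manipulated through its analytic continuation in $s$, Poisson summation must be applied with careful control of convergence, and the constant term at $s=0$ extracted, so that the delicate interchange of summation, integration and limit is justified and the precise singular and Weyl-chamber data survive intact.
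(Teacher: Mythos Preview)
The paper does not prove this theorem: it is stated as a known result and attributed to Borcherds (\cite{Bo2}, Theorem 13.3; see also \cite{Br2}, \textbf{3.4}), with no argument given in the text. Your proposal is therefore not a comparison target for anything in the paper itself; rather, it is a sketch of Borcherds' original proof strategy, and as such it is broadly correct in outline---construct $\Phi(Z,f)$, show pluriharmonicity via a differential identity for $\Theta_L$, extract the singularities to get the divisor, unfold after splitting off a hyperbolic plane to obtain the product expansion, and read off automorphy and finite-order multiplier from invariance plus integrality.
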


We also have the following converse theorem due to Bruinier
(\cite{Br2}, Theorem 5.12).

\begin{theorem}
\label{th:converse-th}
 Let $\Psi$ be a meromorphic automorphic form on $\Gamma^*(L)$
whose divisor is a $\Z$-linear combination of
 Heegner divisors.
Then there exists a weakly holomorohic modular form $f\in
 \cM_{-m/2}(\rho_L)$ satisfying (\ref{eq:input-data})
such that $\Psi$ is a nonzero constant multiple of 
$\Psi_f$.
\end{theorem}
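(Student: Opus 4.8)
The plan is to run Borcherds' construction backwards: read off the divisor data of $\Psi$, produce a weakly holomorphic input form $f\in\cM_{-m/2}(\rho_L)$ realizing exactly that singular behaviour, and then show $\Psi$ and $\Psi_f$ differ by a constant. First I would extract the divisor data. Since $D(\lambda)=D(-\lambda)$, we have $H(\alpha,n)=H(-\alpha,n)$, so writing $\mathrm{div}(\Psi)=\frac12\sum_{\alpha}\sum_{n<0}c(\alpha,n)H(\alpha,n)$ with the multiplicity conventions of Theorem \ref{th:Borcherds}(i) yields a well-defined family $c(\alpha,n)\in\Z$ (for $n<0$) satisfying $c(\alpha,n)=c(-\alpha,n)$. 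These are the candidate principal-part coefficients of the $f$ I am looking for.

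The second step is an existence statement for $f$. By Borcherds' duality (Serre duality for vector-valued modular forms), a formal principal part $\sum_{\alpha,n<0}c(\alpha,n)\gre_\alpha(n\tau)$, symmetric under $\alpha\mapsto-\alpha$, is the principal part of some $f\in\cM_{-m/2}(\rho_L)$ if and only if
\[
\sum_{\alpha\in L^*/L}\ \sum_{n<0}c(\alpha,n)\,a_g(\alpha,-n)=0
\]
for every cusp form $g=\sum_{\alpha,\ell}a_g(\alpha,\ell)\gre_\alpha(\ell\tau)$ in the complementary space $\cS_{2+m/2}(\rho_L^{*})$ of weight $2+m/2=2-(-m/2)$ for the dual representation $\rho_L^{*}$. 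Note that $f$ is then essentially unique: as $-m/2<0$ there are no nonzero holomorphic forms of weight $-m/2$ when $m>0$, and only the one-dimensional weight-$0$ ambiguity survives when $m=0$.

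The hard part will be verifying that these obstruction relations actually hold for the data coming from a genuine automorphic form $\Psi$. The mechanism I would use is the modularity of the generating series of Heegner divisors (Borcherds' higher-dimensional Gross--Kohnen--Zagier theorem): the classes $[H(\alpha,n)]$ are the nonconstant Fourier coefficients of a modular form of weight $2+m/2$ and representation $\rho_L^{*}$ valued in $\mathrm{CH}^1(\cX_L)_{\C}$, whose constant term is a multiple of $c_1(\cL)$ for the automorphic line bundle $\cL$. Pairing this generating series coefficientwise against a cusp form $g\in\cS_{2+m/2}(\rho_L^{*})$ identifies the obstruction $\sum_{\alpha,n<0}c(\alpha,n)a_g(\alpha,-n)$ with the image of $[\mathrm{div}(\Psi)]$ under the $\C$-linear functional on $\mathrm{CH}^1(\cX_L)_{\C}$ determined by $g$; because $g$ is cuspidal this functional annihilates the constant-term class $c_1(\cL)$. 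Since $\Psi$ is a meromorphic section of $\cL^{\otimes w}$, where $w$ denotes its weight, we have $[\mathrm{div}(\Psi)]=w\,c_1(\cL)$, which is exactly what the cuspidal pairing kills. Hence every obstruction vanishes and $f$ exists; this modularity input is the genuinely deep ingredient of the argument.

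Finally I would compare. Applying Theorem \ref{th:Borcherds} to $f$ produces a meromorphic automorphic form $\Psi_f$ whose divisor equals $\mathrm{div}(\Psi)$ by part (i), both being governed by the same coefficients $c(\alpha,n)$ with $n<0$. Therefore $\Psi/\Psi_f$ is a holomorphic automorphic form on $\cX_L=\Gamma^*(L)\bs\cD$ with neither zeros nor poles. As $\dim\cX_L=m+2\ge2$, the Koecher principle lets it extend to the normal projective Baily--Borel compactification, and an everywhere invertible holomorphic form on a projective variety has empty divisor, hence must be a nonzero constant of weight $0$. In particular the weight $w$ of $\Psi$ coincides with $c(0,0)/2$, and $\Psi=c\,\Psi_f$ for some $c\in\C^{\times}$, which is the assertion.
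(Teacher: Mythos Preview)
The paper does not actually prove Theorem~\ref{th:converse-th}. It is stated as ``the following converse theorem due to Bruinier (\cite{Br2}, Theorem 5.12)'' and is quoted without proof, so there is no argument in the paper against which to compare your attempt.

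That said, your sketch is a faithful outline of the proof one finds in \cite{Br2} (and in Borcherds' companion paper on the Gross--Kohnen--Zagier theorem in higher dimensions): read off the principal part from $\mathrm{div}(\Psi)$, use Serre duality for vector-valued modular forms to reduce existence of $f$ to the vanishing of pairings with cusp forms in $\cS_{2+m/2}(\rho_L^{*})$, and kill those pairings via the modularity of the generating series of Heegner divisor classes together with $[\mathrm{div}(\Psi)]=w\,c_1(\cL)$. One minor point you should tighten: $\Psi_f$ in Theorem~\ref{th:Borcherds} comes only with a multiplier system of finite order, and $\Psi$ may have its own multiplier, so $\Psi/\Psi_f$ is a priori only automorphic for a finite-order character; the clean way is to pass to a suitable power before invoking the Koecher principle and the constancy argument, and then take roots. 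Apart from that, your outline is correct and matches the cited source.
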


We call $\Psi$ satisfying the condition of Theorem
\ref{th:converse-th} a \emph{Borcherds lift} on $\Gamma^*(L)$.

\subsection{Multiplicative symmetry for Borcherds lifts}

\begin{theorem}
\label{th:main-2}
 Let $\Psi$ be a Borcherds lift on $\Gamma^*(L)$.
Then $\Psi$ satisfies the multiplicative symmetry.
\end{theorem}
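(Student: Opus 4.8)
The plan is to deduce Theorem \ref{th:main-2} from Theorem \ref{th:main-1} (the additive symmetry for the Siegel theta function) together with part (ii) of Theorem \ref{th:Borcherds}, which expresses $\log|\Psi_f(Z)|$ in terms of $\Phi(Z,f)$, the regularized theta integral. By Theorem \ref{th:converse-th} it suffices to treat $\Psi=\Psi_f$ for $f\in\cM_{-m/2}(\rho_L)$ satisfying (\ref{eq:input-data}); and since both the additive and the multiplicative symmetry only need to be checked for $n=p$ a prime, I would fix a prime $p$ throughout.

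First I would show that $Z\mapsto\Phi(Z,f)$ satisfies the additive symmetry, i.e. $\Phi(\cdot,f)|_0T\up_p=\Phi(\cdot,f)|_0T\down_p$ in the notation of Section 2. This is where Theorem \ref{th:main-1} enters: for each $N>1$ and each $s$ with $\Re(s)$ large, the integrand $\form{f(\tau)}{\Theta_L(\tau,Z)}$ is, via (\ref{eq:Theta}), a finite $\C$-linear combination of the $\Theta_{\alpha}(\tau,Z)$, so applying $T\up_p$ resp. $T\down_p$ in the $Z$-variable and interchanging the finite sum over $\cH_p$ with the integral $\int_{F_N}$, Theorem \ref{th:main-1} gives termwise equality of the two integrands; hence $\Phi_N(Z,f,s)$ satisfies the additive symmetry for every $N$, and passing to the limit $N\to\infty$ and then taking the constant term of the Laurent expansion at $s=0$ preserves this. (One small point to record: $F|_0T\up_n$ for $n=p$ is, up to the harmless normalizing constant $\tfrac1n$ and the trivial factor $(n/d)^0=1$, exactly the sum over $\cH_p$ appearing in Theorem \ref{th:main-1}, so the two notions of additive symmetry literally coincide for the weight-zero function $\Phi(\cdot,f)$.)

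Next I would feed this into Theorem \ref{th:Borcherds}(ii). Summing the identity
\[
\log|\Psi_f(Z)|=-\tfrac14\Phi(Z,f)-\tfrac{c(0,0)}{2}\bigl(\log Q_1[\Im(Z)]+\Gamma'(1)/2+\log\sqrt{2\pi}\bigr)
\]
over the substitutions $Z\mapsto\iota\up(\sqrt p\inv\Nmat{a&b}{0&d})\act{Z}$ with $(a,b,d)\in\cH_p$, and doing the same with $\iota\down$, the $\Phi$-terms on the two sides agree by the previous step. The $\Gamma'(1)$ and $\sqrt{2\pi}$ constants are independent of $Z$ and contribute the same total on both sides. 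The only genuinely non-constant leftover is the term $-\tfrac{c(0,0)}{2}\log Q_1[\Im(Z)]$: I would compute $Q_1[\Im(\iota\up(\sqrt p\inv\Nmat{a&b}{0&d})\act{Z})]$ and $Q_1[\Im(\iota\down(\cdots)\act{Z})]$ explicitly. For $\iota\up(\sqrt p\inv\Nmat{a&b}{0&d})$ the action sends $(z,w,z')\mapsto(\tfrac{az+b}{d},\tfrac{\sqrt p}{d}w,z')$, so $Q_1[\Im Z]=2\Im z\,\Im z'-S[\Im w]$ is multiplied by $a/d$ (the Jacobian-type factor), and symmetrically $\iota\down$ multiplies it by $a/d$ as well with the roles of $z,z'$ swapped; since $(a,b,d)\mapsto(a,b,d)$ runs over the same set $\cH_p$ on both sides, $\prod_{(a,b,d)\in\cH_p}(a/d)$ is the same real number on the left and on the right, so $\sum_{(a,b,d)}\log Q_1[\Im(\cdot)]$ differs between the two sides only by a $Z$-independent constant. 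Therefore $\log|\Psi_f|\cT\up_p|$ and $\log|\Psi_f|\cT\down_p|$ differ by a real constant, i.e. $|\Psi_f|\cT\up_p|/|\Psi_f|\cT\down_p|$ is a positive constant. Finally, $\Psi_f|\cT\up_p$ and $\Psi_f|\cT\down_p$ are both meromorphic on $\cD$ (finite products of the meromorphic function $\Psi_f$ composed with biholomorphisms), so a meromorphic function of constant modulus is itself a constant; hence $\Psi_f|\cT\up_p=\epsilon_p(\Psi_f)\,\Psi_f|\cT\down_p$ with $|\epsilon_p(\Psi_f)|=1$. As both symmetries reduce to the prime case, this proves the multiplicative symmetry for all $n\in\NN$.

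The main obstacle I anticipate is bookkeeping rather than conceptual: carefully justifying the interchange of the finite sum over $\cH_p$ with the regularization (the $N\to\infty$ limit and the Laurent-coefficient extraction at $s=0$) — this is routine because $\cH_p$ is finite and each $\Phi_N(\cdot,f,s)$ already has the additive symmetry for fixed $N$ and $\Re(s)$ large, so the identity is one of meromorphic functions of $s$ and persists term-by-term — and tracking the non-$Z$-dependence of the leftover $\log Q_1[\Im(Z)]$ and constant terms, which amounts to the elementary observation that $\cH_p$ is the \emph{same} index set on both sides of the asserted identity. One should also note in passing that possible multiplier-system subtleties are absorbed into the constant $\epsilon_p(\Psi_f)$ and do not affect the argument, since we only ever compare absolute values until the very last step.
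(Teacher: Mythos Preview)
Your proposal is correct and follows essentially the same route as the paper's proof: reduce to $\Psi=\Psi_f$, deduce the additive symmetry of $\Phi(Z,f)$ from Theorem~\ref{th:main-1} and the definition of the regularized integral, and then read off the multiplicative symmetry from Theorem~\ref{th:Borcherds}(ii). The paper's proof is three sentences long and leaves unspoken exactly the bookkeeping you supply---the handling of the $\log Q_1[\Im(Z)]$ term and the constant-modulus-implies-constant step; your computation in fact shows that the two sums of $\log Q_1[\Im(\cdot)]$ over $\cH_p$ are \emph{equal} (both scale $Q_1[\Im Z]$ by $a/d$ over the same index set), so the ratio of absolute values is $1$ on the nose, which is what forces $|\epsilon_p(\Psi_f)|=1$.
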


\begin{proof}
 We may suppose that  $\Psi=\Psi_f$ 
with some $f\in \cM_{-m/2}(\rho_L)$.
By Theorem \ref{th:main-1} and the definition of
$\Phi(Z,f)$, $Z\mapsto \Phi(Z,f)$ satisfies the additive symmetry.
The multiplicative symmetry for $\Psi_f$ follows directly 
 from
this fact in view of Theorem \ref{th:Borcherds} (ii).
\end{proof}

\section{Automorphic Green functions}

\subsection{Automorphic Green functions}

The automorphic Green functions associated with
Heegner divisors have been introduced by Bruinier (\cite{Br1},
\cite{Br2})
and Oda and Tsuzuki (\cite{OT}) independently.
Note that the unitary group case is also studied by Oda and Tsuzuki.
The automorphic Green functions are also studied by Bruinier and K\"{u}hn
(\cite{BK}) from an arithmetic point of view.
In this section, we recall the definition of the automorphic Green
functions mainly after \cite{BK}, and show their additive symmetry
 assuming Theorem \ref{th:main-1}.

Put
$\kappa=(m+4)/2$.
Let $\alpha\in L^*/L$ and $n\in q(\alpha)+\Z$ with $n<0$.
For $Z\in\cD\setminus H(\alpha,n)$ and $s\in\C$ with
$\Re(s)>\kappa/2$, we set
\begin{align}
\label{eq:AGF}
 \Phi_{\alpha,n}(Z,s)&=2\,\dfrac{\Gamma(s+\kappa/2-1)}{\Gamma(2s)}\\
&\qquad\times
\sum_{\lambda\in\alpha+L,\,q(\lambda)=n}
\left(\dfrac{n}{n-q(\lambda_Z)}\right)^{s+\kappa/2-1}
F\left(s+\dfrac{\kappa}{2}-1,s-\dfrac{\kappa}{2}+1,2s\, ; \,
\dfrac{n}{n-q(\lambda_Z)}
\right), \nonumber
\end{align}
where $\lambda_Z=\lambda_{v_Z}$ and
\begin{align*}
 F(a,b,c;z)&=\sum_{k=0}^{\infty}\dfrac{(a)_k(b)_k}{(c)_k}\dfrac{z^k}{k!},\\
(a)_k&=\dfrac{\Gamma(a+k)}{\Gamma(a)}.
\end{align*}
Note that
$q(\lambda_Z)=(2Q_1[\Im(Z)])\inv|Q(\lambda,\widetilde
Z)|^2$.
The series (\ref{eq:AGF})
converges locally uniformly for $Z\in\cD\setminus
H(\alpha,n)$
and $\Re(s)>\kappa/2$.
It is easy to see that $\Phi_{\alpha,n}(Z,s)$ is $\Gamma^*(L)$-invariant
in $Z$.

\begin{theorem}[\cite{Br1}, \cite{Br2}, \cite{OT}]
 \begin{enumerate}
  \item The function $\Phi_{\alpha,n}(Z,s)$
has a meromorphic continuation in $s$ to a neighborhood of $\kappa/2$
with a simple pole at $s=\kappa/2$.
  \item As a function of $Z$, $\Phi_{\alpha,n}(Z,s)$
is real analytic on $\cD\setminus H(\alpha,n)$
and has a logarithmic singularity along $H(\alpha,n)$.
Namely, if $U$ is a compact neighborhood of any $Z_0\in\cD$,
there exists a finite set $S(U)$
of $\lambda\in\alpha+L$ with $q(\lambda)=n$ such that
\[
 \Phi_{\alpha,n}(Z,s)=-4\sum_{\lambda\in S(U)}\log q(\lambda_Z)+O(1)
\]
on $U$.
  \item Let $\Delta$ denote the $G(\R)$-invariant Laplace
operator on $\cD$ induced by the Casimir element of the Lie algebra
of $G(\R)$ normalized as in \cite{Br2}, page 72.
Then we have
\[
 \Delta \Phi_{\alpha,n}(Z,s)=\Lambda(s)\Phi_{\alpha,n}(Z,s),
\]
where
\[
 \Lambda(s)=\half \left(s-\dfrac{\kappa}{2}\right)
\left(s+\dfrac{\kappa}{2}-1\right).
\]
  \item Suppose that $\Re(s)>\kappa/2$.
Then $\Phi_{\alpha,n}(Z,s)\in L^1(\cX_L)$.
If $f$ is a smooth bounded function on $\cX_L$ with
$\Delta f=\Lambda_f f$, then
\[
 \int_{\cX_L}\Phi_{\alpha,n}(Z,s)\,f(Z)\,\Omega^{m+2}=
-\dfrac{m+2}{2\Gamma(s-\kappa/2+1)}\dfrac{1}{\Lambda_f-\Lambda(s)}
\int_{H(\alpha,n)}f(Z)\Omega^{m+1}.
\]
Here $\Omega=-dd^c \log Q(\widetilde Z,\ol{\widetilde Z})$.
 \end{enumerate}
\end{theorem}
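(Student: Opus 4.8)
\emph{Sketch of a proof.}\quad The plan is to derive the four assertions from two complementary descriptions of $\Phi_{\alpha,n}(Z,s)$: a geometric one, in which each summand is a resolvent kernel on $\cD$, and an automorphic one, in which $\Phi_{\alpha,n}$ is a regularized theta lift of a Maass--Poincar\'{e} series. For the geometric picture, the key observation is that the summand
\[
 G_s(\lambda,Z)=\left(\frac{n}{n-q(\lambda_Z)}\right)^{s+\kappa/2-1}
 F\!\left(s+\frac{\kappa}{2}-1,\ s-\frac{\kappa}{2}+1,\ 2s\ ;\
 \frac{n}{n-q(\lambda_Z)}\right)
\]
depends on $Z$ only through the point-pair invariant $q(\lambda_Z)=(2Q_1[\Im(Z)])^{-1}|Q(\lambda,\widetilde Z)|^2$, so $G_s(\lambda,\cdot)$ is radial about the totally geodesic divisor $D(\lambda)$. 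For the automorphic picture, following \cite{Br2} I would introduce the Maass--Poincar\'{e} series $F_{\alpha,n}(\tau,s)$ of weight $-m/2$ and type $\rho_L$ attached to the cusp $\infty$ and to the datum $(\alpha,n)$ --- an eigenfunction of the weight-$(-m/2)$ hyperbolic Laplacian on $\grH$ --- and show, by unfolding it against the theta kernel $\Theta_L$ of (\ref{eq:Theta}), that up to an explicit constant
\[
 \Phi_{\alpha,n}(Z,s)=\int^{\mathrm{reg}}_{\cF}
 \form{F_{\alpha,n}(\tau,s)}{\Theta_L(\tau,Z)}\,
 \Im(\tau)^{-2}\,d\Re(\tau)\,d\Im(\tau),
\]
where $\cF$ is a fundamental domain for $\SL_2(\Z)$ and the integral is regularized as in Borcherds's construction (cf.\ \textbf{5.4}).

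Assertion (iii) then follows, and can be verified in two mutually reinforcing ways. Directly: expressing $\Delta$ in coordinates transverse to $D(\lambda)$ and using that $G_s(\lambda,\cdot)$ is radial reduces the equation $\Delta\,G_s(\lambda,\cdot)=\Lambda(s)\,G_s(\lambda,\cdot)$ to the hypergeometric differential equation, and the parameters $(s+\kappa/2-1,\,s-\kappa/2+1,\,2s)$ together with the prefactor exponent $s+\kappa/2-1$ are exactly those forced by $\Lambda(s)=\tfrac12(s-\kappa/2)(s+\kappa/2-1)$; summing over $\lambda\in\alpha+L$ with $q(\lambda)=n$ and invoking the locally uniform convergence already recorded gives (iii) on $\cD\setminus H(\alpha,n)$. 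Via the theta lift: since the kernel $\Theta_L$ intertwines $\Delta$ in $Z$ with the weight-$(-m/2)$ Laplacian in $\tau$ up to lower-order terms, integrating by parts in $\tau$ transports $\Delta$ onto $F_{\alpha,n}(\tau,s)$, whose hyperbolic eigenvalue corresponds to $\Lambda(s)$, the boundary contributions from the regularization being meromorphic in $s$ and not affecting the eigenvalue. For (ii), the identity $c-a-b=2s-(s+\kappa/2-1)-(s-\kappa/2+1)=0$ places us in the logarithmic case of $F(a,b,c;z)$ at $z=1$, where $F(a,b,a+b;z)=-\tfrac{\Gamma(a+b)}{\Gamma(a)\Gamma(b)}\bigl(\log(1-z)+O(1)\bigr)$ as $z\to 1^-$. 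Since $1-\tfrac{n}{n-q(\lambda_Z)}=\tfrac{q(\lambda_Z)}{q(\lambda_Z)-n}$ with $n<0$ and $q(\lambda_Z)\ge 0$, and $q(\lambda_Z)$ vanishes exactly along $D(\lambda)$ to the order dictated by the equation $Q(\lambda,\widetilde Z)=0$, the term $G_s(\lambda,Z)$ contributes a constant multiple of $\log q(\lambda_Z)$ near $D(\lambda)$; tracking the front factor $2\,\Gamma(s+\kappa/2-1)/\Gamma(2s)$, the prefactor $(n/(n-q(\lambda_Z)))^{s+\kappa/2-1}\to 1$, and the multiplicity of $D(\lambda)$ in $H(\alpha,n)$ pins down the coefficient in the statement. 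On a compact $U$ only finitely many $\lambda\in\alpha+L$ with $q(\lambda)=n$ satisfy $D(\lambda)\cap U\ne\emptyset$; collecting these into $S(U)$ and bounding the residual tail uniformly on $U$ via the convergence estimate yields the displayed expansion, and real-analyticity off $H(\alpha,n)$ is immediate since each $G_s(\lambda,\cdot)$ is real-analytic there and the series converges locally uniformly with all $Z$-derivatives.

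Assertions (i) and (iv) I would obtain from the theta-lift description. The meromorphic continuation of $\Phi_{\alpha,n}(Z,s)$ past $\Re(s)=\kappa/2$ follows from the Selberg-type continuation of $F_{\alpha,n}(\tau,s)$, together with the fact that the regularization subtracts the cusp divergence and leaves a meromorphic remainder; the simple pole at $s=\kappa/2$ and its residue are read off from the constant term of the regularized integral (equivalently, from the residue carried by $F_{\alpha,n}$). For (iv), the $L^1$-bound on $\cX_L$ when $\Re(s)>\kappa/2$ follows from the rapid decay of $F_{\alpha,n}(\tau,s)$ toward the cusp there, modulo an explicit polynomial term, transported through the theta integral. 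To obtain the pairing identity I would use the geometric description: decompose $\{\lambda\in\alpha+L\mid q(\lambda)=n\}$ into $\Gamma^*(L)$-orbits with representatives $\lambda_j$ and stabilizers $\Gamma_{\lambda_j}$, and unfold
\[
 \int_{\cX_L}\Phi_{\alpha,n}(Z,s)\,f(Z)\,\Omega^{m+2}
 =2\,\frac{\Gamma(s+\kappa/2-1)}{\Gamma(2s)}\,\sum_j
 \int_{\Gamma_{\lambda_j}\backslash\cD}G_s(\lambda_j,Z)\,f(Z)\,\Omega^{m+2}.
\]
Since $G_s(\lambda_j,\cdot)$ is, up to a constant, the resolvent kernel of $\Delta$ with source along $D(\lambda_j)$ --- i.e.\ $(\Delta-\Lambda(s))\,G_s(\lambda_j,\cdot)$ is a constant multiple of the Dirac current $\delta_{D(\lambda_j)}$ --- an application of Green's formula with $\Delta f=\Lambda_f f$ collapses each term to a multiple of $\int_{D(\lambda_j)/\Gamma_{\lambda_j}}f\,\Omega^{m+1}$; summing and identifying $\bigcup_j D(\lambda_j)/\Gamma_{\lambda_j}$ with $H(\alpha,n)$ yields the formula, the constants combining to $-\tfrac{m+2}{2\Gamma(s-\kappa/2+1)}\cdot\tfrac{1}{\Lambda_f-\Lambda(s)}$.

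The step I expect to be the main obstacle is the analytic continuation (i): controlling the regularized theta integral near the cusp precisely enough to locate the simple pole at $s=\kappa/2$ and to compute its residue, equivalently resumming (\ref{eq:AGF}) directly through its Fourier--Whittaker expansion. A close second is the rigorous justification of the unfolding, of the current equation for $G_s(\lambda,\cdot)$, of the boundary estimates underlying the $L^1$-statement, and of Green's formula in (iv). By contrast, once the point-pair-invariant picture is set up, the eigenvalue equation (iii) and the logarithmic singularity (ii) reduce to classical facts about Gauss's hypergeometric function.
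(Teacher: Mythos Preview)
The paper does not prove this theorem at all: it is stated with the attribution \cite{Br1}, \cite{Br2}, \cite{OT} and no argument is given, since the result is quoted from the literature as background for the application in Theorem~\ref{th:main-3}. There is therefore nothing in the paper to compare your attempt against.

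That said, your sketch is a faithful outline of the arguments in the cited sources. The geometric description via the point-pair invariant and the hypergeometric equation (yielding (ii) and (iii)) is the approach of Oda--Tsuzuki \cite{OT}, while the identification with the regularized theta lift of the Maass--Poincar\'{e} series $F_{\alpha,n}(\tau,s)$ (yielding (i) and the analytic continuation) is Bruinier's approach in \cite{Br2}, Chapters~1--2; the integral formula in (iv) is essentially \cite{BK}. Your identification of the main technical obstacle --- controlling the regularized integral near the cusp to locate the pole at $s=\kappa/2$ --- is accurate, and is precisely where \cite{Br2} spends the most effort (via the explicit Fourier expansion of $F_{\alpha,n}$ and the truncated integral). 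The only point where your sketch is a bit loose is the bookkeeping for the constant $-4$ in (ii) and the constant $-\tfrac{m+2}{2\Gamma(s-\kappa/2+1)}$ in (iv); these require careful normalization of $\Omega$, of the current equation for $G_s$, and of the multiplicity convention for $H(\alpha,n)$, and in the literature they are pinned down by explicit computation rather than by the qualitative argument you indicate.
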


We call $\Phi_{\alpha,n}(Z,s)$ the \emph{automorphic Green function}
associated with the Heegner divisor $H(\alpha,n)$.

\subsection{The additive symmetry for automorphic Green functions}

\begin{theorem}
\label{th:main-3}
 For $\alpha\in
L^*/L$ and $n\in q(\alpha)+\Z$ with $n<0$,
the automorphic Green function $\Phi_{\alpha,n}(Z,s)$
satisfies the additive symmetry in $Z$.
\end{theorem}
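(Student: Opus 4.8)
The plan is to deduce Theorem~\ref{th:main-3} from Theorem~\ref{th:main-1} by using Bruinier's realization of the automorphic Green function as a regularized theta integral. First I would recall from \cite{Br2} (see also \textbf{6.1} and the introduction) that, up to a nonzero normalizing constant,
\[
 \Phi_{\alpha,n}(Z,s)=\int_{\cF}^{\mathrm{reg}}\form{P_{\alpha,n}(\tau,s)}{\Theta_L(\tau,Z)}\,\Im(\tau)^{-1}\,d\Re(\tau)\,d\Im(\tau),
\]
where $P_{\alpha,n}(\tau,s)$ is the Maass--Poincar\'{e} series of weight $-m/2$ with respect to $\rho_L$ attached to $(\alpha,n)$, $\cF$ is a fundamental domain for the action of $\SL_2(\Z)$ on $\grH$, and $\int_{\cF}^{\mathrm{reg}}$ is the regularized integral (truncate to $F_T$, let $T\to\infty$, with an auxiliary analytic continuation as in \textbf{5.4} if needed). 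This holds for $\Re(s)>\kappa/2$, and $\Phi_{\alpha,n}(Z,s)$ is the meromorphic continuation in $s$ of the right-hand side; the $\Gamma^*(L)$-invariance of $Z\mapsto\Theta_L(\tau,Z)$ makes the integrand descend to $\cF$ and makes the right-hand side a function on $\cD\setminus H(\alpha,n)$.

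Next I would fix a natural number $N$ (to be distinguished carefully from the discriminant $n$) and consider the operators $F\mapsto F|_0T\up_N$ and $F\mapsto F|_0T\down_N$ acting on functions $F$ of $Z\in\cD$. For each fixed $\tau\in\grH$, Theorem~\ref{th:main-1} applied to each component $\Theta_\beta$ ($\beta\in L^*/L$) of $\Theta_L=\sum_\beta\gre_\beta\Theta_\beta$ says precisely that $Z\mapsto\Theta_L(\tau,Z)$ satisfies the additive symmetry, i.e.
\[
 \sum_{a,b,d}\Theta_L\left(\tau,\left(\dfrac{az+b}{d},\dfrac{\sqrt{N}}{d}w,z'\right)\right)=\sum_{a,b,d}\Theta_L\left(\tau,\left(z,\dfrac{\sqrt{N}}{d}w,\dfrac{az'+b}{d}\right)\right)
\]
as an identity in $\C[L^*/L]$, with $(a,b,d)$ running over $\cH_N$. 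Pairing both sides with $P_{\alpha,n}(\tau,s)$ and using that $\form{}{}$ is additive in its second argument, one gets, for every $\tau$ and every $s$,
\[
 \sum_{a,b,d}\form{P_{\alpha,n}(\tau,s)}{\Theta_L\left(\tau,\iota\up\left(\sqrt{N}\inv \Nmat{a&b}{0&d}\right)\act{Z}\right)}=\sum_{a,b,d}\form{P_{\alpha,n}(\tau,s)}{\Theta_L\left(\tau,\iota\down\left(\sqrt{N}\inv \Nmat{a&b}{0&d}\right)\act{Z}\right)}.
\]

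Finally I would integrate this identity over $\cF$ and move the finite sums over $\cH_N$ through the regularized integral. Because $\cH_N$ is finite, the interchange is legitimate once one knows that, for each individual $(a,b,d)$ and with $g=\iota\up\left(\sqrt{N}\inv \Nmat{a&b}{0&d}\right)$ (or the corresponding $\iota\down$), the regularized integral of $\tau\mapsto\form{P_{\alpha,n}(\tau,s)}{\Theta_L(\tau,g\act{Z})}$ exists in the sense defining $\Phi_{\alpha,n}$ and equals $\Phi_{\alpha,n}(g\act{Z},s)$. For fixed $Z$ one has $g\act{Z}\in\cD$, and $g\act{Z}\notin H(\alpha,n)$ whenever $Z$ lies outside a certain countable union of analytic divisors of $\cD$; for such $Z$ the required equality is exactly the integral representation of the first paragraph applied at the point $g\act{Z}$. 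I expect this to be the only genuine technical point: one must verify that Bruinier's estimate for the growth of $\form{P_{\alpha,n}(\tau,s)}{\Theta_L(\tau,\cdot)}$ as $\Im(\tau)\to\infty$ is locally uniform in the $\cD$-variable, so that the substitution $Z\mapsto g\act{Z}$ commutes with the truncation-and-limit defining the regularization; alternatively, one first runs the argument for $\Re(s)$ large, where the relevant series and integrals converge absolutely, and then invokes uniqueness of meromorphic continuation in $s$. Granting this, the displayed identity integrates to $\Phi_{\alpha,n}(\cdot,s)|_0T\up_N=\Phi_{\alpha,n}(\cdot,s)|_0T\down_N$ off the exceptional divisors, hence on all of $\cD\setminus H(\alpha,n)$ by the real-analyticity of $\Phi_{\alpha,n}(\cdot,s)$ recalled above; since $N\ge1$ was arbitrary, $\Phi_{\alpha,n}(Z,s)$ satisfies the additive symmetry in $Z$.
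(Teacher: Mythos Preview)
Your approach is correct and essentially the same as the paper's: express $\Phi_{\alpha,n}(Z,s)$ as the regularized theta integral of the Maass--Poincar\'{e} series against $\Theta_L(\tau,Z)$ via Bruinier, apply Theorem~\ref{th:main-1} to each component of $\Theta_L$, and pass the finite sum over $\cH_N$ through the integral. The paper's proof is in fact terser than yours, recording the integral representation and then stating that the additive symmetry for $\Phi_{\alpha,n}$ ``follows directly'' from that of $\Theta_L$, without spelling out the interchange-of-limits or exceptional-divisor considerations you mention.
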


\begin{proof}
 To prove the theorem, we  recall the result of Bruinier (\cite{Br2}).
Put $k=-m/2$ and 
\begin{align*}
 \bM_s(y)&=y^{s-k/2}e^{-y/2}\sum_{l=0}^{\infty}\dfrac{(s+k/2)_l}{(2s)_l\,l!}\,
y^l\qquad (y>0, s\in\C).
\end{align*}
Define the non-holomorphic Poincar\'{e} series
\[
 F_{\alpha,n}(\tau,s)=
\dfrac{1}{2\Gamma(2s)}\,
\sum_{(M,\phi)\in\widetilde \Gamma\oo\bs \Mp_2(\Z)}\left[
\bM_s(4\pi |n|y)\gre_{\alpha}(nx)
\right]|_{k}(M,\phi),
\]
where $\tau=x+iy\in\grH$, $s\in\C$ with $\Re(s)>1$ and $\widetilde
\Gamma\oo$
is the subgroup of $\Mp_2(\Z)$ generated by $T$.
Then we have
\[
 \Phi_{\alpha,n}(Z,s)=\lim_{u\to\infty}
\int_{\cF_u}\form{F_{\alpha,n}(\tau,s)}
{\Theta_{L}(\tau,Z)}
\,y\dfrac{dxdy}{y^2},
\]
where $\cF_u=\{\tau=x+iy\mid |\tau|\ge 1, |x|\ge 1/2, y\le u\}$
 if $\Re(s)$ is sufficiently
large 
(\cite{Br2}, \textbf{2.2} and \textbf{2.3}).
The additive symmetry for $\Phi_{\alpha,n}(Z,s)$ 
follows  directly from that of $\Theta_{L}(\tau,Z)$.
\end{proof}


\section{Proof of Theorem \ref{th:main-1}}
\subsection{Reduction}
First note that, to prove Theorem \ref{th:main-1}, it suffices to show 
\begin{align}
\label{eq:reduced}
 &\Theta_{\alpha}\left(\tau,\left(pz,\sqrt{p}w,z'\right)\right)
+\sum_{a=0}^{p-1}\Theta_{\alpha}\left(\tau,\left(p\inv (z+a),\sqrt{p}\inv
				   w,z'\right)\right)\\
&\quad \nonumber
=\Theta_{\alpha}\left(\tau,\left(z,\sqrt{p}w,pz'\right)\right)
+\sum_{a=0}^{p-1}\Theta_{\alpha}\left(\tau,\left(z,\sqrt{p}\inv w,p\inv (z'+a)\right)\right)
\end{align}
for any $\alpha\in L^*/L$ and any prime $p$.
Thrroughout this section, we fix an $\alpha\in L^*/L$ and 
a prime $p$.
Take an $\alpha_0\in L_0^*$ such that
\[
 \alpha\equiv \vecm{0\\0\\ \alpha_0\\0\\0}\pmod{L}
\]
and put
\[
 \alpha_1=(0,\alpha_0,0)\in L_1^*.
\]
Let $\cY=\{Y=\tp(y_1,\ldots,y_{m+2})\in V_1\mid Q_1[Y]>0, y_1>0\}$.
Note that $\Im(Z)\in \cY$ for $Z\in \cD$, and that
$\cY$ is naturally identified with the Grassmannian of $L_1$.
For $\tau\in \grH,Y\in \cY, r,t\in V_1$, we define the
generalized
Siegel theta function
$\theta_{\alpha_1}$ for $L_1$ by
\begin{equation}
\label{eq:gstf}
 \theta_{\alpha_1}(\tau,Y;r,t)
=
\sum_{\lambda\in\alpha_1+L_1}
\e\left(
i\,\Im(\tau)\dfrac{Q_1(\lambda+t,Y)^2}{Q_1[Y]}+
\dfrac{\ol{\tau}}{2}\,Q_1[\lambda+t]
-Q_1\left(\lambda+\dfrac{t}{2},r\right)
\right).
\end{equation}

The following formula due to Borcherds
 ([Bo2], Theorem 5.2; see also [Br], Theorem 2.4)
plays a crucial role in the
proof of Theorem \ref{th:main-1}.

\begin{proposition}
\label{prop:BF}
 We have
\[
 \Theta_{\alpha}(\tau,Z)=
\sqrt{\dfrac{Q_1[Y_Z]}{2\,\Im(\tau)}}
\sum_{c,d\in \Z}\e\left(
\dfrac{i\,|c\tau+d|^2\,Q_1[Y_Z]}{4\,\Im(\tau)}
\right)
\theta_{\alpha_1}(\tau,Y_Z;d X_Z,-c X_Z),
\]
where $X_Z=\Re(Z)$ and $Y_Z=\Im(Z)$, and $\alpha_1$ as defined above.
\end{proposition}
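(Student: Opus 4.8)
The plan is to specialize Borcherds' splitting argument (\cite{Bo2}, Theorem 5.2) to the hyperbolic plane $P=\Z f\oplus\Z f'$ with $f=e_1$, $f'=e_{m+4}$, which is the orthogonal complement of $L_1$ inside $L$; a single application of Poisson summation then converts $\Theta_{\alpha}(\tau,Z)$ into the generalized theta function for $L_1$. First I would use the coordinates of \textbf{7.1}: since $Q(f,f')=1$, $Q[f]=Q[f']=0$ and $P\perp L_1$, every $\lambda\in\alpha+L$ is uniquely $\lambda=af+\mu+bf'$ with $a,b\in\Z$ and $\mu\in\alpha_1+L_1$. From $Q(f,\widetilde Z)=1$ and $Q(f',\widetilde Z)=-Q_1[Z]/2$ one reads off
\[
 Q[\lambda]=Q_1[\mu]+2ab,\qquad Q(\lambda,\widetilde Z)=a+Q_1(\mu,Z)-\tfrac{b}{2}\,Q_1[Z].
\]
Substituting these into $\Theta_{\alpha}(\tau,Z)=\sum_{\lambda\in\alpha+L}\e\bigl(i\,\Im(\tau)\,|Q(\lambda,\widetilde Z)|^2/Q_1[\Im(Z)]+\tfrac{\ol{\tau}}{2}Q[\lambda]\bigr)$ rewrites the Siegel theta function as an absolutely convergent triple sum over $a,b\in\Z$ and $\mu\in\alpha_1+L_1$.

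The key step is Poisson summation in the variable $a$, for $b$ and $\mu$ fixed. Write $Q_1(\mu,Z)-\tfrac{b}{2}Q_1[Z]=P_{\mu,b}+iR_{\mu,b}$ with $P_{\mu,b},R_{\mu,b}\in\R$; here $R_{\mu,b}=Q_1(\mu-bX_Z,Y_Z)$ and $P_{\mu,b}=\tfrac{b}{2}Q_1[Y_Z]+\bigl(Q_1(\mu,X_Z)-\tfrac{b}{2}Q_1[X_Z]\bigr)$, so its only $X_Z$-free piece is $\tfrac{b}{2}Q_1[Y_Z]$. The $a$-dependence of the summand is then exactly a Gaussian of width $2\Im(\tau)/Q_1[Y_Z]$ centred at $-P_{\mu,b}$ times the linear exponential $\e(\ol{\tau}ab)$. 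Applying the one-dimensional Poisson summation formula produces the overall factor $\sqrt{Q_1[Y_Z]/(2\Im(\tau))}$ together with a new sum over $c\in\Z$, the $c$-summand carrying a Gaussian term $\exp\bigl(-\pi Q_1[Y_Z](c-\ol{\tau}b)^2/(2\Im(\tau))\bigr)$ and a phase $\exp\bigl(2\pi i\,P_{\mu,b}(c-\ol{\tau}b)\bigr)$; all series in sight remain absolutely convergent, so the rearrangement is legitimate.

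It then remains to reorganize the resulting sum over $(b,c)$ and $\mu$. The skeleton is the diagonal case $X_Z=0$, where $\theta_{\alpha_1}$ loses its $(r,t)$-dependence and factors out, reducing the claim to the rank-two theta identity
\[
 \sum_{a,b\in\Z}\e\Bigl(i\,\Im(\tau)\,\tfrac{(a+\tfrac{b}{2}Q_1[Y_Z])^2}{Q_1[Y_Z]}+\ol{\tau}ab\Bigr)
 =\sqrt{\tfrac{Q_1[Y_Z]}{2\Im(\tau)}}\sum_{c,d\in\Z}\e\Bigl(\tfrac{i\,Q_1[Y_Z]\,|c\tau+d|^2}{4\Im(\tau)}\Bigr);
\]
this follows from the Poisson step once one uses $2i\Im(\tau)=\tau-\ol{\tau}$ and $(c-\ol{\tau}b)(c-\tau b)=|c-\tau b|^2$ to collapse the $(b,c)$-exponent, then relabels $(b,c)$ so that $|c-\tau b|^2$ becomes $|c\tau+d|^2$. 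For general $Z$, the $X_Z$-free part $\tfrac{b}{2}Q_1[Y_Z]$ of $P_{\mu,b}$ still builds the prefactor $\e\bigl(i\,|c\tau+d|^2Q_1[Y_Z]/(4\Im(\tau))\bigr)$, while the remaining $\mu$- and $X_Z$-dependent contributions — the phase $\e\bigl((Q_1(\mu,X_Z)-\tfrac{b}{2}Q_1[X_Z])(c-\ol{\tau}b)\bigr)$, the Gaussian $\e\bigl(i\,\Im(\tau)R_{\mu,b}^2/Q_1[Y_Z]\bigr)$ with $R_{\mu,b}=Q_1(\mu-bX_Z,Y_Z)$, and $\e(\tfrac{\ol{\tau}}{2}Q_1[\mu])$ — reassemble, after completing the square in $\mu$ relative to the shift $\mu\mapsto\mu-bX_Z$, into exactly $\theta_{\alpha_1}(\tau,Y_Z;dX_Z,-cX_Z)$; the $r=dX_Z$, $t=-cX_Z$ insertions are forced by how $X_Z$ enters $P_{\mu,b}$ and $R_{\mu,b}$.

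I expect this final reorganization to be the main obstacle: one has to separate real and imaginary parts at the right moment, keep track of the half-integral shift $\alpha_1$ and of the $\tfrac{t}{2}$-term inside $Q_1(\mu+\tfrac{t}{2},r)$ in the definition \eqref{eq:gstf} of $\theta_{\alpha_1}$, and check that no residual phase is left over. Since Proposition \ref{prop:BF} is merely the $b^+=2$, polynomial-degree-zero instance of \cite{Bo2}, Theorem 5.2 (see also \cite{Br2}, Theorem 2.4), one may alternatively invoke that result directly, noting that the hyperbolic plane used there is our $P$.
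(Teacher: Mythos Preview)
The paper does not prove Proposition~\ref{prop:BF} at all; it simply attributes the formula to Borcherds (\cite{Bo2}, Theorem~5.2; see also \cite{Br2}, Theorem~2.4) and uses it as input. Your proposal correctly identifies this citation at the end, and the sketch you give --- split off the hyperbolic plane $\Z e_1\oplus\Z e_{m+4}$, write $\lambda=af+\mu+bf'$, and Poisson-sum in $a$ --- is precisely Borcherds' own argument specialized to this lattice, so you are in complete agreement with the paper (and in fact go further than it does by outlining the actual computation).
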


From now on we fix $\tau\in\grH$ and $Z\in \cD$.
Let 
\[
X= X_Z=(x,X_0,x'), Y=Y_Z=(y,Y_0,y')
\qquad (x,x'\in \R, y,y'\in \R_{>0}, X_0,Y_0\in V_0=\R^m)
\]
and  put
\begin{align*}
 &X\up_+=(px,\sqrt{p}X_0,x'), \quad Y\up_+=(py,\sqrt{p}Y_0,y'),\\
 & X\up_-(a)=(p\inv(x+a),\sqrt{p}\inv X_0,x'), \quad 
  Y\up_-=(p\inv y,\sqrt{p}\inv Y_0,y'),\\
 &X\down_+=(x,\sqrt{p}X_0,px'),  \quad Y\down_+=(y,\sqrt{p}Y_0,py'),\\
 & X\down_-(a)=(x,\sqrt{p}\inv X_0,p\inv(x'+a)), \quad 
  Y\down_-=(y,\sqrt{p}\inv Y_0,p\inv y')
\end{align*}
for $a\in \Z$.
We set
\begin{align*}
 I\up_+(c,d)&=\theta_{\alpha_1}(\tau,Y\up_+;d X\up_+,-c X\up_+),\\
 I\up_-(c,d)&=\sum_{a=0}^{p-1}\theta_{\alpha_1}(\tau,Y\up_-;d
 X\up_-(a),-c X\up_-(a)),\\
 I\down_+(c,d)&=\theta_{\alpha_1}(\tau,Y\down_+;d X\down_+,-c X\down_+),\\
 I\down_-(c,d)&=\sum_{a=0}^{p-1}\theta_{\alpha_1}(\tau,Y\down_-;d
 X\down_-(a),-c X\down_-(a))
\end{align*}
for $c,d\in \Z$. 
Observe that $Q_1[Y\up_+]=Q_1[Y\down_+]=pQ_1[Y]$
and $Q_1[Y\up_-]=Q_1[Y\down_-]=p\inv Q_1[Y]$.
In view of Proposition \ref{prop:BF}, the proof of (\ref{eq:reduced})
 is reduced to  the
following equality:

\begin{align}
\label{eq:reduced-1} 
& \sum_{c,d\in \Z}\e\left(
    p\dfrac{i\,|c\tau+d|^2\,Q_1[Y_Z]}{4\,\Im(\tau)}
       \right)
      I\up_+(c,d)
 +
p\inv 
   \sum_{c,d\in \Z}\e\left(
    p\inv\dfrac{i\,|c\tau+d|^2\,Q_1[Y_Z]}{4\,\Im(\tau)}
       \right)
      I\up_-(c,d)\\
&=\quad 
\sum_{c,d\in \Z}\e\left(
    p\dfrac{i\,|c\tau+d|^2\,Q_1[Y_Z]}{4\,\Im(\tau)}
       \right)
      I\down_+(c,d)+
p\inv 
   \sum_{c,d\in \Z}\e\left(
    p\inv\dfrac{i\,|c\tau+d|^2\,Q_1[Y_Z]}{4\,\Im(\tau)}
       \right)
      I\down_-(c,d)\nonumber
\end{align}

\begin{theorem}
\label{th:spin}
 \begin{enumerate}
  \item If $p\nmid \,m$ or $p\nmid \, n$, we have
\begin{equation}
\label{eq:spin-1}
 I\up_-(c,d)=I\down_-(c,d).
\end{equation}
  \item If $p|m$ and $p|n$, we have
\begin{equation}
\label{eq:spin-2}
 I\up_-(c,d)=p\,I\down_+(p\inv c,p\inv d)
\end{equation}
and
\begin{equation}
\label{eq:spin-3}
 I\down_-(c,d)=p\,I\up_+(p\inv c,p\inv d).
\end{equation}
 \end{enumerate}
\end{theorem}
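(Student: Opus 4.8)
The plan is to manipulate the sums $I\up_\pm(c,d)$, $I\down_\pm(c,d)$ purely by changes of summation variable in the underlying generalized Siegel theta series, together with one isometry of $(V_1,Q_1)$. For a lattice $M\subset V_1$, a coset representative $\beta$, $Y\in\cY$ and $r,t\in V_1$, set
\[
\theta_{\beta,M}(\tau,Y;r,t)=\sum_{\lambda\in\beta+M}\e\!\left(i\,\Im(\tau)\frac{Q_1(\lambda+t,Y)^2}{Q_1[Y]}+\frac{\ol\tau}{2}Q_1[\lambda+t]-Q_1\!\left(\lambda+\tfrac t2,r\right)\right),
\]
so $\theta_{\alpha_1}=\theta_{\alpha_1,L_1}$; for $g\in O(Q_1)(\R)$ one has $\theta_{g\beta,gM}(\tau,gY;gr,gt)=\theta_{\beta,M}(\tau,Y;r,t)$ (substitute $\lambda\mapsto g\lambda$). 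The key observation will be that $g=\diag(p,1_m,p\inv)\in O(Q_1)(\R)^+$ fixes $\alpha_1=(0,\alpha_0,0)$, satisfies $(gv)_1(gv)_3=v_1v_3$ (subscripts denoting first and last coordinates), and exchanges the up/down data: $gX\up_-(0)=X\down_-(0)$, $gY\up_-=Y\down_-$, $gX\down_+=X\up_+$, $gY\down_+=Y\up_+$; moreover $pX\up_-(0)=X\down_+$, $pY\up_-=Y\down_+$, $pX\down_-(0)=X\up_+$, $pY\down_-=Y\up_+$, and $(1,0,0)$ and $(0,0,1)$ are $Q_1$-isotropic. All of these are immediate from the definitions in \textbf{7.1}.

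For part (ii) ($p\mid c$, $p\mid d$) I would write $c=pc'$, $d=pd'$ and, in the $a$-th summand of $I\up_-(c,d)$, substitute $\lambda\mapsto\lambda+c'a\,(1,0,0)\in L_1$. Using $pX\up_-(0)=X\down_+$, $pY\up_-=Y\down_+$ and the isotropy of $(1,0,0)$ — which makes every $a^2$-term vanish — a routine expansion should show that the summand becomes independent of $a$ and equals the summand of $\theta_{\alpha_1}(\tau,Y\down_+;d'X\down_+,-c'X\down_+)$ up to a phase $\e(-d'a\ell_3)$ with $\ell_3\in\Z$, hence equal to $1$ since $p\mid d$. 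Summing over $a$ gives the factor $p$ of \eqref{eq:spin-2}, and \eqref{eq:spin-3} follows by the same argument with the two extreme coordinates swapped.

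For part (i), suppose first $p\nmid c$. Then $\lambda\mapsto\lambda'=\lambda-cp\inv a\,(1,0,0)$ is a bijection from $\{0,\dots,p-1\}\times(\alpha_1+L_1)$ onto $\alpha_1+L_1^{(1)}$, where $L_1^{(1)}=p\inv\Z(1,0,0)\oplus\Z^m\oplus\Z(0,0,1)$; since $\lambda-cX\up_-(a)=\lambda'-cX\up_-(0)$ and all $a^2$-terms again vanish, the expansion should identify $I\up_-(c,d)$ with $\sum_{\lambda'\in\alpha_1+L_1^{(1)}}$ of the $\theta_{\alpha_1,L_1^{(1)}}(\tau,Y\up_-;dX\up_-(0),-cX\up_-(0))$-summand times a residual phase $\e(-dp\inv a(\lambda')\lambda'_3)$, where $a(\lambda')$ is fixed by $c\,a(\lambda')\equiv-p\lambda'_1\pmod p$; a short check identifies this phase with $\e(d\ol c\,\lambda'_1\lambda'_3)$ ($\ol c$ an inverse of $c$ mod $p$), which is trivial when $p\mid d$ and in all cases invariant under $\lambda'\mapsto g\lambda'$. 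The parallel computation, substituting in the last coordinate, should give the identical description of $I\down_-(c,d)$ with $L_1^{(1)}$ replaced by $L_1^{(3)}=\Z(1,0,0)\oplus\Z^m\oplus p\inv\Z(0,0,1)$, data $(Y\down_-;dX\down_-(0),-cX\down_-(0))$, and the same phase; as $gL_1^{(1)}=L_1^{(3)}$, $g$ matches all this data, and $g$ fixes $\lambda'_1\lambda'_3$, the isometry identity gives $I\up_-(c,d)=I\down_-(c,d)$. In the remaining case $p\mid c$, $p\nmid d$, the substitution of part (ii) is again legitimate, and $\sum_a\e(-dp\inv a\ell_3)=p\,\delta(p\mid\ell_3)$ yields $I\up_-(c,d)=p\,\theta_{\alpha_1,M}(\tau,Y\down_+;dp\inv X\down_+,-c'X\down_+)$ with $M=\Z(1,0,0)\oplus\Z^m\oplus p\Z(0,0,1)$, and likewise $I\down_-(c,d)=p\,\theta_{\alpha_1,M'}(\tau,Y\up_+;dp\inv X\up_+,-c'X\up_+)$ with $M'=p\Z(1,0,0)\oplus\Z^m\oplus\Z(0,0,1)$; then $gM=M'$ and the isometry identity close the case.

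The hard part will be the subcase $p\nmid c$, $p\nmid d$ of part (i): there the $a$-sum cannot be absorbed by an integral translation of $L_1$, so one must carry the quadratic residual phase $\e(d\ol c\,\lambda'_1\lambda'_3)$ through the whole computation, and the argument succeeds only because that phase, the auxiliary lattices $L_1^{(1)}$ and $L_1^{(3)}$, and all the Grassmannian and translation data are matched simultaneously by the single isometry $g=\diag(p,1_m,p\inv)$. The points that need real care are therefore the coset count (which uses $p\nmid c$) and the explicit evaluation of the residual phase; the systematic cancellation of the $a$- and $a^2$-cross terms, which is what makes the scheme go through, is a consequence of the isotropy of the two extreme coordinate axes.
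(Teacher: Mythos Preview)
Your proposal is correct, and it takes a genuinely different route from the paper. The paper proves part~(i) by writing out the expanded formula \eqref{eq:gstf-1}, making the same substitutions you make (in the case $p\nmid c$ it sets $l_1=pl-ca$, which is your $p\lambda'_1$), and then matching the two resulting explicit expressions for $I\up_-$ and $I\down_-$ by inspection; for part~(ii) it first applies Poisson summation in one of the integer variables (Lemma~\ref{lem:gstf-modified}~(ii), formula \eqref{eq:gstf-2}) and only then performs the shifts $l\to l+p\inv ca$, $l'\to l'+p\inv da$, again comparing the outcome term by term with an expanded formula for $p\,I\down_+(p\inv c,p\inv d)$. Your argument replaces both of these brute-force comparisons by a single structural observation: the isometry $g=\diag(p,1_m,p\inv)\in O(Q_1)$ swaps the ``up'' and ``down'' data while fixing $\alpha_1$ and preserving the residual phase $\e(d\ol c\,\lambda'_1\lambda'_3)$ (since $(g\lambda')_1(g\lambda')_3=\lambda'_1\lambda'_3$), so the two sums are literally the same sum read through $g$. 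This is cleaner and, notably, it handles part~(ii) without Poisson summation at all---the key point being, as you observe, that when $p\mid c$ and $p\mid d$ the residual phase $\e(-d'a\ell_3)$ is already an integer and the $a$-sum collapses to a factor of $p$ directly. What the paper's more computational approach buys is that it stays entirely within the explicit formulas it has already recorded, with no auxiliary lattices or isometries to introduce; what your approach buys is a transparent explanation of \emph{why} the two sides agree, and a proof that would adapt more readily to variants of the setup.
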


\bigskip

We now show that  Theorem \ref{th:spin} implies
the equality (\ref{eq:reduced-1}) and hence Theorem \ref{th:main-1}.
By (\ref{eq:spin-1}), the left-hand side of (\ref{eq:reduced-1})
is equal to
\begin{align*}
&\sum_{c,d\in \Z}\e\left(
    p\dfrac{i\,|c\tau+d|^2\,Q_1[Y]}{4\,\Im(\tau)}
       \right)
      I\up_+(c,d)  +
   \sum_{c,d\in \Z}\e\left(
   p \dfrac{i\,|c\tau+d|^2\,Q_1[Y]}{4\,\Im(\tau)}
       \right)
      I\down_+(c,d)\\
&\quad  +
p\inv \sum_{(c,d)\in \Lambda_p}\e\left(
    p\inv\dfrac{i\,|c\tau+d|^2\,Q_1[Y]}{4\,\Im(\tau)}
       \right)
      I\up_-(c,d),
   \end{align*}
where $\Lambda_p=\{(c,d)\in \Z^2\mid \text{$p\nmid m$ or $p \nmid
n$}\}$.
Similarly, by (\ref{eq:spin-3}),
the right-hand side of (\ref{eq:reduced-1})
is equal to 
\begin{align*}
& \sum_{c,d\in \Z}\e\left(
    p\dfrac{i\,|c\tau+d|^2\,Q_1[Y]}{4\,\Im(\tau)}
       \right)
      I\down_+(c,d)  +
   \sum_{c,d\in \Z}\e\left(
    p\dfrac{i\,|c\tau+d|^2\,Q_1[Y]}{4\,\Im(\tau)}
       \right)
      I\up_+(c,d)\\
&\quad +
p\inv \sum_{(c,d)\in \Lambda_p}\e\left(
    p\inv\dfrac{i\,|c\tau+d|^2\,Q_1[Y]}{4\,\Im(\tau)}
       \right)
      I\down_-(c,d).
   \end{align*}
The equality (\ref{eq:reduced-1}) now follows from (\ref{eq:spin-1}).

\subsection{Generalized Siegel theta functions}

To prove Theorem \ref{th:spin}, we need the following formulas for
generalized Siegel theta functions.

\begin{lemma}
\label{lem:gstf-modified}
Let $X=(x,X_0,x')\in V_1$ and $Y=(y,Y_0,y')\in \cY$.
\begin{enumerate}
 \item We have
\begin{align}
\label{eq:gstf-1}
 \lefteqn{\theta_{\alpha_1}(\tau,Y; dX,-cX)}\\
 &=\e\left(\dfrac{cd}{2}(2xx'-S[X_0])\right)
\notag \\
&\times
\sum_{\substack{l,\,l'\in\Z \\ \lambda_0\in \alpha_0+L_0}}
\e\left(
\dfrac{i\,\Im(\tau)}{Q_1[Y]}\Bigl\{
(l-cx)y'+(l'-cx')y-S(\lambda_0-cX_0,Y_0)
\Bigr\}^2 \right. \notag\\
&\left. \qquad+\dfrac{\ol{\tau}}{2}
\Bigl(
2(l-cx)(l'-cx')-S[\lambda_0-cX_0]
\Bigr)
-d\left(lx'+l'x-S(\lambda_0,X_0)\right)
\right). \notag
\end{align}
 \item We have
\begin{align}
\label{eq:gstf-2}
&\theta_{\alpha_1}(\tau,Y;dX,-cX)\\
&=\sqrt{\dfrac{Q_1[Y]}{2y^2\Im(\tau)}}\ 
\e\left(
\dfrac{cd}{2}(2xx'-S[X_0])\right)
\notag\\
&
\times \sum_{\substack{l\in \Z\\ \lambda_0\in \alpha_0+L_0}}
\e\left(
\dfrac{i\,\Im(\tau)}{Q_1[Y]}
\Bigl\{
(l-cx)y'-cx'y-S(\lambda_0-cX_0,Y_0)
\Bigr\}^2 \right.\notag\\
&\left.\qquad \qquad \qquad
+\dfrac{\ol{\tau}}{2}
\Bigl(
-2cx'(l-cx)-S[\lambda_0-cX_0]
\Bigr)
-d\left(lx'-S(\lambda_0,X_0)\right)
\right)
\notag\\
& \sum_{l'\in \Z}
\e\left(
\dfrac{iQ_1[Y]}{4y^2\Im(\tau)}
\Bigl\{
l'+\dfrac{2iy\Im(\tau)}{Q_1[Y]}\left(
(l-cx)y'-cx'y-S(\lambda_0-cX_0,Y_0)
\right)
\right.\notag\\
&\qquad\qquad\qquad
+\ol{\tau}(l-cx)-dx
\left.
\Bigr\}^2
\right)\notag
\end{align}
and
\begin{align}
\label{eq:gstf-3}
& \theta_{\alpha_1}(\tau,Y;dX,-cX) \\
&=\sqrt{\dfrac{Q_1[Y]}{2(y')^2\Im(\tau)}}\ 
\e\left(
\dfrac{cd}{2}(2xx'-S[X_0])\right)
\notag\\
&\quad
\times \sum_{\substack{l'\in \Z\\ \lambda_0\in \alpha_0+L_0}}
\e\left(
\dfrac{i\,\Im(\tau)}{Q_1[Y]}
\Bigl\{
-cxy'+(l'-cx')y-S(\lambda_0-cX_0,Y_0)
\Bigr\}^2 \right.\notag\\
&\left.\qquad \qquad \qquad
+\dfrac{\ol{\tau}}{2}
\Bigl(
-2cx(l'-cx')-S[\lambda_0-cX_0]
\Bigr)
-d\left(l'x-S(\lambda_0,X_0)\right)
\right)
\notag\\
&\qquad \sum_{l\in \Z}
\e\left(
\dfrac{iQ_1[Y]}{4(y')^2\Im(\tau)}
\Bigl\{
l+\dfrac{2iy'\Im(\tau)}{Q_1[Y]}\left(
-cxy'+(l'-cx')y-S(\lambda_0-cX_0,Y_0)
\right)
\right.\notag\\
&\qquad\qquad\qquad
+\ol{\tau}(l'-cx')-dx'
\left.
\Bigr\}^2
\right)\notag.
\end{align}
\end{enumerate}
\end{lemma}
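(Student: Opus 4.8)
The plan is to derive all three identities in Lemma \ref{lem:gstf-modified} from the single definition (\ref{eq:gstf}) of the generalized Siegel theta function by making the block structure of $Q_1$ explicit and then applying a one-variable Poisson summation (or theta inversion) in a suitably chosen coordinate. First I would write a general lattice element $\lambda\in\alpha_1+L_1$ as $\lambda=(l,\lambda_0,l')$ with $l,l'\in\Z$ and $\lambda_0\in\alpha_0+L_0$, and similarly expand $X=(x,X_0,x')$, $Y=(y,Y_0,y')$. Using $Q_1(u,u')=u_1u_3'+u_3u_1'-S(u_2,u_2')$ one computes $Q_1(\lambda+t,Y)$ and $Q_1[\lambda+t]$ with $t=-cX$, and the bilinear term $Q_1(\lambda+\tfrac{t}{2},r)$ with $r=dX$. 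The cross term $Q_1(\tfrac{t}{2},r)=-\tfrac{cd}{2}Q_1[X]=-\tfrac{cd}{2}(2xx'-S[X_0])$ is independent of $\lambda$, so it factors out as the prefactor $\e\!\left(\tfrac{cd}{2}(2xx'-S[X_0])\right)$ appearing in all three formulas; the remaining $\lambda$-dependent part of $Q_1(\lambda+\tfrac t2,r)$ is $-d\bigl(Q_1(\lambda,X)\bigr)=-d(lx'+l'x-S(\lambda_0,X_0))$, which is exactly the linear character in (\ref{eq:gstf-1}). Substituting these expansions and reindexing $l\mapsto l$, $l'\mapsto l'$ (keeping the shifts $-cx$, $-cx'$, $-cX_0$ visible inside the exponent) yields (\ref{eq:gstf-1}) after routine but careful bookkeeping of which terms carry $\Im(\tau)$ versus $\ol\tau$.

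For parts (\ref{eq:gstf-2}) and (\ref{eq:gstf-3}) the idea is to single out the summation variable $l'$ (respectively $l$) in the expression (\ref{eq:gstf-1}) and apply Poisson summation to that one variable. Concretely, in (\ref{eq:gstf-1}) the dependence on $l'$ is through a quantity of the form $A(l'-cx')^2 + (\text{linear in }l')\cdot(l'-cx')+(\text{linear in }l')$, i.e. a Gaussian of the form $\e\bigl(\alpha (l')^2+\beta l'+\gamma\bigr)$ with $\alpha$ having positive imaginary part (coming from $i\Im(\tau)Q_1[Y]\inv y^2$ together with the $\ol\tau$ contribution $2\inv\ol\tau\cdot(-2(l-cx))$—one must check the coefficient of $(l')^2$ is precisely $i\Im(\tau)y^2/Q_1[Y]$, whose imaginary part is positive). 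Applying the classical one-dimensional Poisson summation formula $\sum_{l'\in\Z}\e(\alpha l'^2+\beta l')=(-i\alpha)^{-1/2}\sum_{l\in\Z}\e\!\left(-\tfrac{(l-\beta)^2}{4\alpha}\right)$ (with a holomorphic branch of the square root, giving the factor $\sqrt{Q_1[Y]/(2y^2\Im(\tau))}$) turns the inner $l'$-sum into a new sum over a dual index, producing the nested-sum shape of (\ref{eq:gstf-2}). Formula (\ref{eq:gstf-3}) is obtained identically but with the roles of the first and third coordinates interchanged, i.e. by applying Poisson summation in the $l$-variable instead; this symmetry ($x\leftrightarrow x'$, $y\leftrightarrow y'$, $l\leftrightarrow l'$) is exactly the up/down swap that will drive Theorem \ref{th:spin}, so it is worth remarking that (\ref{eq:gstf-3}) is literally (\ref{eq:gstf-2}) under that involution of $V_1$ combined with $Q_1$-symmetry.

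The main obstacle I anticipate is purely bookkeeping: correctly completing the square in the chosen variable while tracking the shifts $-cx,-cx',-cX_0$ and the two different "time" parameters $\Im(\tau)$ and $\ol\tau$, and then matching the resulting linear term inside the Gaussian with the somewhat intricate expression $\bigl\{l'+\tfrac{2iy\Im(\tau)}{Q_1[Y]}(\cdots)+\ol\tau(l-cx)-dx\bigr\}^2$ that appears in (\ref{eq:gstf-2}). One has to be careful that the Gaussian coefficient is nonzero and lies in the upper half plane so that Poisson summation (equivalently absolute convergence and the Jacobi theta transformation) applies; this is guaranteed because $\Im(\tau)>0$, $Q_1[Y]>0$, and $y,y'>0$. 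No new convergence issue arises beyond those already implicit in the definition of $\theta_{\alpha_1}$. Once the coefficient identification is done, (\ref{eq:gstf-1})–(\ref{eq:gstf-3}) follow by direct substitution, and I would present only the key algebraic identities ($Q_1(\lambda+t,Y)$, $Q_1[\lambda+t]$, $Q_1(\lambda+\tfrac t2,r)$ in block form, and the completed square) leaving the remaining manipulations to the reader.
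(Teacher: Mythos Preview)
Your proposal is correct and follows essentially the same route as the paper: part (i) is obtained by unwinding the definition (\ref{eq:gstf}) in block coordinates and separating off the $\lambda$-independent factor $\e\!\left(\tfrac{cd}{2}(2xx'-S[X_0])\right)$, and parts (\ref{eq:gstf-2}) and (\ref{eq:gstf-3}) are obtained from (\ref{eq:gstf-1}) by applying the one-dimensional Poisson summation formula to the sum over $l'\in\Z$ (respectively $l\in\Z$). The paper's own proof says exactly this in two lines, so your more detailed bookkeeping is a faithful expansion of the same argument.
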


\begin{proof}
The first assertion is immediate from  (\ref{eq:gstf}).
The equality (\ref{eq:gstf-2})
(respectively (\ref{eq:gstf-3}))
is obtained by applying the Poisson summation formula
to the sum over $l'\in\Z$ (respectively $l\in\Z$)
in the right-hand side of (\ref{eq:gstf-1}).
\end{proof}

\subsection{Proof of Theorem \ref{th:spin} (i)}
In this and the next subsections, we keep the notation of 
 \textbf{7.1}.
In view of (\ref{eq:gstf-1}), we have
\begin{align}
\label{eq:up-minus-1}
&I\up_-(c,d) \\
\notag &=\sum_{\substack{l,\,l'\in \Z \\ \lambda_0\in \alpha_0+ L_0}}
 \sum_{a=0}^{p-1}\e\left(
\dfrac{cd}{2}\,\Bigl(
2p\inv (x+a)x'-p\inv S[X_0]
\Bigr)
\right.\\
&\qquad\qquad\notag
\left.
+p\,\dfrac{i\,\Im(\tau)}{Q_1[Y]}
\Bigl\{
\left(l-p\inv c(x+a)\right)y'
+p\inv (l'-cx')y
-S(\lambda_0-c\sqrt{p}\inv X_0,\sqrt{p}\inv Y_0)
\Bigr\}^2
\right.\\
&\notag\left.
\qquad \qquad +\dfrac{\ol{\tau}}{2}\Bigl(
2\left(l-p\inv c(x+a)\right)(l'-cx')-S[\lambda_0-c\sqrt{p}\inv X_0]
\Bigr)
\right.\\
&\notag \left.\qquad\qquad 
-d\bigl(
lx'+p\inv l'(x+a)-S(\lambda_0,\sqrt{p}\inv X_0)
\bigr)
\right)
\end{align}
and
\begin{align}
\label{eq:down-minus-1}
& I\down_-(c,d) \\
\notag&=\sum_{\substack{l,\,l'\in \Z \\ \lambda_0\in \alpha_0+ L_0}}
 \sum_{a=0}^{p-1}\e\left(
\dfrac{cd}{2}\,\Bigl(
2p\inv x(x'+a)-p\inv S[X_0]
\Bigr)
\right.\\
&\qquad\qquad\notag
\left.
+p\,\dfrac{i\,\Im(\tau)}{Q_1[Y]}
\Bigl\{
p\inv (l-cx)y'
+\left(l'-p\inv c(x'+a)\right)y
-S(\lambda_0-c \sqrt{p}\inv X_0,\sqrt{p}\inv Y_0)
\Bigr\}^2
\right.\\
&\notag\left.
\qquad \qquad +\dfrac{\ol{\tau}}{2}\Bigl(
2(l-cx)\left(l'-p\inv c(x'+a)\right)-S[\lambda_0-c \sqrt{p}\inv X_0]
\Bigr)
\right.\\
&\notag \left.\qquad\qquad 
-d \bigl(
p\inv l(x'+a)+l'x-S(\lambda_0,\sqrt{p}\inv X_0)
\bigr)
\right).
\end{align}
First suppose that $p|c$ and $p\nmid \,d$.
Changing $l$ into $l+p\inv ca$ in the sum (\ref{eq:up-minus-1}), we obtain
\begin{align*}
& I\up_-(c,d)\\
&=
\e\left(
\dfrac{cd}{2p}(2xx'-S[X_0])
\right)\\
&\quad \sum_{\substack{l,l'\in \Z\\ \lambda_0\in \alpha_0+L_0}}
\e\left(
p\dfrac{i\,\Im(\tau)}{Q_1[Y]}\Bigl\{
(l-p\inv cx)y'+p\inv (l'-cx')y-S(\lambda_0-c\sqrt{p}\inv
 X_0,\sqrt{p}\inv Y_0)
\Bigr\}^2 \right. \\
&\left. \qquad +\dfrac{\ol{\tau}}{2}
\Bigl(
2(l-p\inv cx)(l'-cx')-S[\lambda_0-c \sqrt{p}\inv X_0]
\Bigr)
-d \bigl(lx'+p\inv l'x-S(\lambda_0,\sqrt{p}\inv X_0)
\bigr)
\right)\\
& \quad \times\sum_{a=0}^{p-1}\e\left(
-\dfrac{dl'}{p}a
\right).
\end{align*}
Since the last sum is equal to $p\, \delta(p|l')$, we have
\begin{align*}
& I\up_-(c,d)\\
\notag &=p\ 
\e\left(
\dfrac{cd}{2p}(2xx'-S[X_0])
\right)\\
&\quad \notag
\sum_{\substack{l,l'\in \Z\\ \lambda_0\in \alpha_0+L_0}}
\e\left(
p\dfrac{i\,\Im(\tau)}{Q_1[Y]}\Bigl\{
(l-p\inv cx)y'+ (l'-p\inv cx')y-S(\lambda_0-c \sqrt{p}\inv
 X_0,\sqrt{p}\inv Y_0)
\Bigr\}^2 \right. \\
&\left. \notag \qquad +\dfrac{\ol{\tau}}{2}
\Bigl(
2p(l-p\inv cx)(l'-p\inv cx')-S[\lambda_0-c \sqrt{p}\inv X_0]
\Bigr)
-d \bigl(lx'+l'x-S(\lambda_0,\sqrt{p}\inv X_0)
\bigr)
\right).
\end{align*}
A similar calculation shows  that $I\down_-(c,d)$ is equal to
 the right-hand side
of the above equality. Thus the equality (\ref{eq:spin-1}) has been proved
in the case $p|c$ and $p\nmid \, d$.

Next suppose that $p\nmid \, c$.
Then $l_1=pl-ca$ runs over $\Z$ as $l$ runs over $\Z$ and 
$a$ over $\{0,1,\ldots,p-1\}$.
Take an integer $c_0$ such that $cc_0\equiv 1\pmod{p}$.
Then $a\equiv -c_0l_1\pmod{p}$.
In view of  (\ref{eq:up-minus-1}), we have
\begin{align*}
& I\up_-(c,d)\\
&=
\e\left(
 \dfrac{cd}{2p}(2xx'-S[X_0])
\right)\\
&\quad 
\sum_{\substack{l_1,l'\in \Z\\ \lambda_0\in \alpha_0+L_0}}
\e\left(
p\dfrac{i\,\Im(\tau)}{Q_1[Y]}\Bigl\{
p\inv (l_1-cx)y'+ p\inv (l'-cx')y-S(\lambda_0-c\sqrt{p}\inv
 X_0,\sqrt{p}\inv Y_0)
\Bigr\}^2 \right. \\
&  \qquad +\dfrac{\ol{\tau}}{2}
\Bigl(
2p\inv (l_1-cx)(l'-cx')-S[\lambda_0-c\sqrt{p}\inv X_0]
\Bigr)
-d \bigl(p\inv l_1x'+p\inv l'x-S(\lambda_0,\sqrt{p}\inv X_0)
\bigr) \\
&\left. \qquad 
+p\inv c_0 d l_1l'
\right).
\end{align*}
On the other hand, since 
$l'_1=pl'-ca$ runs over $\Z$ as $l$ runs over $\Z$ and
$a$ over $\{0,1,\ldots,p-1\}$, and since
$a\equiv -c_0l_1'\pmod{p}$, we obtain
\begin{align*}
& I\down_-(c,d)\\
&=
\e\left(
\dfrac{cd}{2p}(2xx'-S[X_0])
\right)\\
&\quad 
\sum_{\substack{l,l_1'\in \Z\\ \lambda_0\in \alpha_0+L_0}}
\e\left(
p\dfrac{i\,\Im(\tau)}{Q_1[Y]}\Bigl\{
p\inv (l-cx)y'+ p\inv (l_1'-cx')y-S(\lambda_0-c\sqrt{p}\inv
 X_0,\sqrt{p}\inv Y_0)
\Bigr\}^2 \right. \\
&  \qquad +\dfrac{\ol{\tau}}{2}
\Bigl(
2p\inv (l-cx)(l_1'-cx')-S[\lambda_0-c\sqrt{p}\inv X_0]
\Bigr)
-d \bigl(p\inv lx'+p\inv l_1'x-S(\lambda_0,\sqrt{p}\inv X_0)
\bigr) \\
&\left. \qquad 
+p\inv c_0 d ll_1'
\right).
\end{align*}
Comparing these two expressions, we obtain
 the equality (\ref{eq:spin-1}) in the case $p\nmid \, c$.

\subsection{Proof of Theorem \ref{th:spin} (ii)}

Suppose that $p|c$ and $p|d$.
By (\ref{eq:gstf-2}), we have
\begin{align*}
& I\up_-(c,d)\\
&=\sqrt{p}\sqrt{\dfrac{Q_1[Y]}{2y^2 \Im(\tau)}}
  \sum_{a=0}^{p-1}\e\left(
\dfrac{cd}{2}\Bigl(
2p\inv (x+a)x'-p\inv S[X_0]
\Bigr)
\right)\\
&\quad
\sum_{\substack{l\in\Z\\ \lambda_0\in \alpha_0+L_0}}
\e\left(
p\,\dfrac{i\,\Im(\tau)}{Q_1[Y]}\Bigl\{
\left(l-p\inv c (x+a)\right)y'-p\inv cx'y-S(\lambda_0-c\sqrt{p}\inv
 X_0,\sqrt{p}\inv Y_0)
\Bigr\}^2
\right. \\
& \left.\qquad\qquad
+\dfrac{\ol{\tau}}{2}\Bigl(
-2cx'\left(l-p\inv c(x+a)\right)-S[\lambda_0-c \sqrt{p}\inv X_0]
\Bigr)
-d\left(lx'-S(\lambda_0,\sqrt{p}\inv X_0)\right)
\right)\\
&\quad \sum_{l'\in \Z}
\e\left(
p\,\dfrac{iQ_1[Y]}{4y^2\Im(\tau)}\Bigl\{
l'+\dfrac{2iy\Im(\tau)}{Q_1[Y]}\Bigl(
\left(l-p\inv c(x+a)\right)y'
-p\inv cx'y-S(\lambda_0-c\sqrt{p}\inv X_0,\sqrt{p}\inv Y_0)
\Bigr)\right.\\
&\left.\qquad\qquad
+\ol{\tau}\left(l-p\inv c(x+a)\right)-p\inv d(x+a)
\Bigr\}^2
\right).
\end{align*}
Changing $l$ into $l+p\inv ca$ and $l'$ into $l'+p\inv da$
respectively, we see that
$I\up_-(c,d)$ is equal to
\begin{align}
\label{eq:Iupminus}
&p\sqrt{p}\sqrt{\dfrac{Q_1[Y]}{2y^2 \Im(\tau)}}
  \,\e\left(
\dfrac{cd}{2p}\bigl(
2xx'- S[X_0]
\bigr)
\right)\\
\nonumber &\quad \times
\sum_{\substack{l\in\Z\\ \lambda_0\in \alpha_0+L_0}}
\e\left(
p\,\dfrac{i\,\Im(\tau)}{Q_1[Y]}\Bigl\{
(l-p\inv cx)y'
-p\inv cx'y
-S(\lambda_0-c\sqrt{p}\inv
 X_0,\sqrt{p}\inv  Y_0)
\Bigr\}^2
\right. \\
\nonumber & \left.\qquad\qquad
+\dfrac{\ol{\tau}}{2}\Bigl(
-2cx'(l-p\inv cx)-S[\lambda_0-c\sqrt{p}\inv X_0]
\Bigr)
-d\left(lx'-S(\lambda_0, \sqrt{p}\inv X_0)\right)
\right)\\
\nonumber &\quad \sum_{l'\in \Z}
\e\left(
p\,\dfrac{iQ_1[Y]}{4y^2\Im(\tau)}\Bigl\{
l'+p\inv \, \dfrac{2iy\Im(\tau)}{Q_1[Y]}\Bigl(
p(l-p\inv cx)y'-cx'y
-S(\lambda_0-c\sqrt{p}\inv X_0,\sqrt{p} Y_0)
\Bigr)\right.\\
\nonumber &\left.\qquad\qquad
+\ol{\tau}(l-p\inv cx)-p\inv dx
\Bigr\}^2
\right).
\end{align}
Using again (\ref{eq:gstf-2}), we see that $pI\down_+(p\inv c,p\inv d)$
is equal to 
\begin{align}
\label{eq:Idownplus}
 &p\sqrt{p}\,\sqrt{\dfrac{Q_1[Y]}{2y^2 \Im(\tau)}}
\,\e\left(
\dfrac{cd}{2p}\bigl(
2xx'- S[X_0]
\bigr)
\right)\\
\nonumber & \quad \times
\sum_{\substack{l\in\Z\\ \lambda_0\in \alpha_0+L_0}}
\e\left(
p\inv \,\dfrac{i\,\Im(\tau)}{Q_1[Y]}\Bigl\{
\left(l-(p\inv c)x\right)py'
-(p\inv c)px'y
-S(\lambda_0-(p\inv c)
 \sqrt{p}X_0,\sqrt{p} Y_0)
\Bigr\}^2
\right. \\
\nonumber & \left.\qquad\qquad
+\dfrac{\ol{\tau}}{2}\Bigl(
-2(p\inv c)px'\left(l-(p\inv c)x\right)-S[\lambda_0-(p\inv c)\sqrt{p} X_0]
\Bigr)
-p\inv d\left(lpx'-S(\lambda_0, \sqrt{p} X_0)\right)
\right)\\
\nonumber &\quad \sum_{l'\in \Z}
\e\left(
p\,\dfrac{iQ_1[Y]}{4y^2\Im(\tau)}\Bigl\{
l'+p\inv \, \dfrac{2iy\Im(\tau)}{Q_1[Y]}\Bigl(
\left(l-(p\inv c)x\right)py'-(p\inv c)px'y \right. \\
\nonumber &\qquad \left.-S(\lambda_0-(p\inv c)\sqrt{p} X_0,\sqrt{p} Y_0)
\Bigr)
+\ol{\tau}\left(l-(p\inv c)x\right)-(p\inv d)x
\Bigr\}^2
\right).
\end{align}
Comparing (\ref{eq:Iupminus}) and (\ref{eq:Idownplus}), we obtain 
the equality (\ref{eq:spin-2}).
The equality (\ref{eq:spin-3}) is proved in a similar manner.
Then the proof of Theorem \ref{th:spin} has been completed.

\section{Examples}

\subsection{Siegel modular forms of degree two}
In this section, we consider the case where $m=1$ and $S=(2)$.
In this case, $\cD$ is isomorphic to the Siegel upper half space $\grH_2$
of degree $2$, and the space of holomorphic automorphic forms on $\Gamma^*(L)$
of weight $k$
is naturally identified with the space $M_k(\Gamma_2)$ of holomorphic Siegel
modular forms on $\Gamma_2=\mathrm{Sp}_2(\Z)$
of weight $k$. We denote by $S_k(\Gamma_2)$ the space of cusp
forms in $M_k(\Gamma_2)$.

It is known that $S_{10}(\Gamma_2)$ and $S_{12}(\Gamma_2)$ are one dimensional.
Let $\chi_{10}$ and $\chi_{12}$ be nonzero elements of $S_{10}(\Gamma_2)$ and
$S_{12}(\Gamma_2)$ respectively. Then $\chi_{10}$ is a Borcherds lift
(\cite{GN})
and hence satisfies the multiplicative symmetry.
We will show that, on the other hand, $\chi_{12}$ does not satisfy
the multiplicative symmetry and hence is not a Borcherds lift.

\subsection{The Saito-Kurokawa lifting}

To calculate the  Fourier coefficients of $\chi_{10}$ and $\chi_{12}$,
it is convenient to express them as Saito-Kurokawa lifts (see \cite{EZ}).
Let $k,m\in\NN$.
For a holomorphic function $\phi$ on $\grH\times\C$, we put
\begin{align*}
 (\phi|_{k.m}\gamma)(\tau,z)&=(c\tau+d)^{-k}\e\left(
m\dfrac{-cz^2}{c\tau+d}\right)
            \phi\left(\dfrac{a\tau+b}{c\tau+d},\dfrac{z}{c\tau+d}\right)
\quad \left(\gamma=\Nmat{a&b}{c&d}\in \SL_2(\R)\right),\\
(\phi|_m[\lambda,\mu])(\tau,z)&=\e\left(
-m(\lambda^2\tau+2\lambda
 z)\right)
              \phi(\tau,z+\lambda \tau+\mu)
\qquad(\lambda,\mu\in \R).
\end{align*}
Let $J_{k,m}$ be the space of 
holomorphic functions $\phi$ on $\grH\times \C$ satifying
\begin{align*}
 \phi|_{k,m}\gamma&=\phi \qquad (\gamma\in \SL_2(\Z)),\\
\phi|_{m}[\lambda,\mu]&=\phi \qquad (\lambda,
\mu\in\Z)
\end{align*}
and $c_{\phi}(n,r)=0$ unless $4nm\ge r^2$, where
$\phi(\tau,z)=\sum_{n,r\in \Z}c_{\phi}(n,r)\e(n\tau+rz)$
is the Fourier expansion of $\phi$.
We call $J_{k,m}$ the space of holomorphic
Jacobi forms of weight $k$ and index $m$.
Let $J_{k,1}^{\mathrm{cusp}}=\{\phi\in J_{k,m}\mid
c_{\phi}(n,r)=0 \text{ unless }4nm>r^2
\}$ be the space of
Jacobi cusp forms of weight $k$ and index $m$.
For $\phi\in J_{k,1}^{\mathrm{cusp}}$ and $m\in\Z_{>0}$,
we put
\[
 (\phi|V_m)(\tau,z)=m^{k-1}\sum_{\xi}(c\tau+d)^{-k}\e\left(
m\dfrac{-cz^2}{c\tau+d}\right)
\phi\left(\dfrac{a\tau+b}{c\tau+d},\dfrac{z}{c\tau+d}\right),
\]
where $\xi=\Nmat{a&b}{c&d}$ 
runs over $\SL_2(\Z)\backslash \M_2(\Z)$ with $\det\xi=m$.
Then $\phi|V_m\in J_{k,m}^{\mathrm{cusp}}$.
In what follows, we write $(\tau,z,\tau')$ for $\Nmat{\tau&z}{z&\tau'}\in\grH_2$.
Define
\[
 \cV(\phi)(\tau,z,\tau')=
\sum_{m=1}^{\infty}(\phi|V_m)(\tau,z)\e(m\tau')
\qquad((\tau,z,\tau')\in \grH_2).
\]
The Saito-Kurokawa lift $\cV(\phi)$ belongs to $S_k(\Gamma_2)$ and its Fourier
expansion
is given by
\[
 \cV(\phi)(\tau,z,\tau')=\sum_{n,r,m\in \Z,\,4nm>r^2,\,n>0}A(n,r,m)
\e(n\tau+rz+m\tau'),
\]
where
\[
 A(n,r,m)=\sum_{0<d|(n,r,m)}d^{k-1}c_{\phi}\left(nm/d^2,r/d\right)
\]
(see \cite{EZ} \S 3).

\subsection{The Siegel modular forms $\chi_{10}$ and $\chi_{12}$}

For $k\in\Z$ with $k\ge 4$, put
\begin{align*}
 E_k(\tau)&=\half
 \sum_{c,d\in\Z,(c,d)=1}(c\tau+d)^{-k}\qquad(\tau\in\grH),\\
 E_{k,1}(\tau,z)&=\half
 \sum_{c,d\in\Z,(c,d)=1}\sum_{\lambda\in\Z}
(c\tau+d)^{-k}\e\left(
\lambda^2\dfrac{a\tau+b}{c\tau+d}
+2\lambda\dfrac{z}{c\tau+d}-\dfrac{cz^2}{c\tau+d}\right)
\qquad((\tau,z)\in\grH\times
 \C).
\end{align*}
Then $E_k\in M_k(\SL_2(\Z))$ and $E_{k,1}\in J_{k,1}$.
Set
\begin{align*}
 \phi_{10,1}&
    =\dfrac{1}{144}\left(E_6(\tau)E_{4,1}(\tau,z)-E_4(\tau)E_{6,1}(\tau,z)\right)\\
&=\left(\zeta-2+\zeta\inv\right)q
 +\left(-2\zeta^2-16\zeta+36-16\zeta\inv-2\zeta^{-2}\right)q^2\\
&\quad
 +\left(\zeta^3+36\zeta^2+99\zeta-272+99\zeta\inv+36\zeta^{-2}+\zeta^{-3}\right)
q^3+\cdots,\\
 \phi_{12,1}&
    =\dfrac{1}{144}\left(E_4(\tau)^2E_{4,1}(\tau,z)-E_6(\tau)E_{6,1}(\tau,z)\right)\\
&=\left(\zeta+10+\zeta\inv\right)q
 +\left(10\zeta^2-88\zeta-132-88\zeta\inv+10\zeta^{-2}\right)q^2\\
&\quad
 +\left(\zeta^3-132\zeta^2+1275\zeta+736+1275\zeta\inv-132\zeta^{-2}+\zeta^{-3}\right)
q^3+\cdots,
\end{align*}
where $q=\e(\tau),\zeta=\e(z)$.
Then $\phi_{10,1}\in J_{10,1}^{\mathrm{cusp}}$
and $\phi_{12,1}\in J_{12,1}^{\mathrm{cusp}}$.
Let $\chi_{10}=\cV(\phi_{10,1})$ and $\chi_{12}
=\cV(\phi_{12,1})$ be the Saito-Kurokawa lifts of
$\phi_{10,1}$
and $\phi_{12,1}$ respectively.
It is known that $S_{10}(\Gamma_2)=\C\chi_{10}$ 
and $S_{12}(\Gamma_2)=\C\chi_{12}$,
and that $\chi_{10}$ is a Borcherds lift.

\subsection{Relations satisfied by Fourier coefficients of Borcherds
  lifts}
Let
\[
 F(\tau,z,\tau')=\sum_{n,r,m}A_F(n,r,m)\e(n\tau+rz+m\tau')\in S_k(\Gamma_2).
\]
We make a convention that $A_F(n,r,m)=0$ unless $4nm>r^2$ and $n>0$.
Put
\begin{align*}
 F\up(\tau,z,\tau')&=F|\cT\up_2(\tau,\sqrt{2}z,\tau')\\
    &=F(2\tau,2z,\tau')F(\tau/2,z,\tau')F((\tau+1)/2,z,\tau')
\end{align*}
and
\begin{align*}
 F\down(\tau,z,\tau')&=F|\cT\down_2(\tau,\sqrt{2}z,\tau')\\
    &=F(\tau,2z,2\tau')F(\tau,z,\tau'/2)F(\tau,z,(\tau'+1)/2)).
\end{align*}
Let
\begin{align*}
 F\up(\tau,z,\tau')&=\sum_{n,r,m}A_F\up(n,r,m)\e(n\tau+rz+m\tau'),\\
 F\down(\tau,z,\tau')&=\sum_{n,r,m}A_F\down(n,r,m)\e(n\tau+rz+m\tau')
\end{align*}
be the Fourier expansions of $F\up$ and $F\down$ respectively.

\begin{lemma}
\label{lem:fourier}
 Suppose that $F$ is a Borcherds lift.
Then there exists a complex number $\epsilon$ of absolute value $1$ such
 that
$A_F\up(n,r,m)=\epsilon A_F\down(n,r,m)$ for every $n,r,m$.
\end{lemma}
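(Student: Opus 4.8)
The plan is to deduce the lemma directly from the multiplicative symmetry for Borcherds lifts (Theorem \ref{th:main-2}), specialized to $n=2$. First I would record the dictionary between the notation of this section and that of Section 2. Under the identification $\cD\cong\grH_2$ (valid here since $m=1$ and $S=(2)$), in which the middle coordinate $w$ of $Z=(z,w,z')\in\cD$ corresponds to the off-diagonal Jacobi variable, one has $\cH_2=\{(2,0,1),(1,0,2),(1,1,2)\}$, so that by the definitions of $F|\cT\up_n$, $F|\cT\down_n$ in Section 2 and of $F\up$, $F\down$ in \textbf{8.4},
\[
 F\up(\tau,z,\tau')=(F|\cT\up_2)(\tau,\sqrt2\,z,\tau'),\qquad
 F\down(\tau,z,\tau')=(F|\cT\down_2)(\tau,\sqrt2\,z,\tau').
\]

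Next, since $F$ is a Borcherds lift, Theorem \ref{th:main-2} tells us that $F$ satisfies the multiplicative symmetry (\ref{eq:ms}); taking $n=2$ gives $F|\cT\up_2=\epsilon_2(F)\,F|\cT\down_2$ on $\cD$, where $\epsilon_2(F)\in\C$ has absolute value $1$. Evaluating this identity at the point $(\tau,\sqrt2\,z,\tau')$ and combining it with the previous display yields
\[
 F\up(\tau,z,\tau')=\epsilon\,F\down(\tau,z,\tau')
 \qquad\text{for all }(\tau,z,\tau')\in\grH\times\C\times\grH,
\]
with $\epsilon:=\epsilon_2(F)$, so $|\epsilon|=1$.

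It then remains to pass to Fourier coefficients. Here I would check that $F\up$ and $F\down$ genuinely carry the Fourier expansions written in \textbf{8.4}: each is holomorphic on the tube domain (a product of holomorphic functions), invariant under $z\mapsto z+1$ and under one of $\tau\mapsto\tau+1$, $\tau'\mapsto\tau'+1$, while under the remaining unit translation the two half-integral shifts (for $F\up$, the factors $F(\tau/2,z,\tau')$ and $F((\tau+1)/2,z,\tau')$) are merely interchanged, so the product is again invariant; and cuspidality of $F\in S_k(\Gamma_2)$ keeps all $q$-exponents positive. Granting this, $F\up$ and $F\down$ have genuine integer-indexed Fourier expansions $\sum A_F\up(n,r,m)\e(n\tau+rz+m\tau')$ and $\sum A_F\down(n,r,m)\e(n\tau+rz+m\tau')$, and uniqueness of Fourier coefficients converts $F\up=\epsilon\,F\down$ into $A_F\up(n,r,m)=\epsilon\,A_F\down(n,r,m)$ for every $n,r,m$.

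There is no genuine obstacle: all the substance is already contained in Theorem \ref{th:main-1} (hence in Theorem \ref{th:main-2}), and the argument above is pure bookkeeping. The only point worth a line of care is the periodicity check in the last paragraph --- that the products defining $F\up$ and $F\down$ are periodic, not merely $\tfrac12$-periodic, in $\tau$ and $\tau'$ --- since this is what makes the comparison of integer-indexed Fourier coefficients meaningful.
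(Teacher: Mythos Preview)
Your argument is correct and follows exactly the approach of the paper, which simply records that the lemma follows from Theorem \ref{th:main-2}. You have merely spelled out the bookkeeping (the identification $F\updown(\tau,z,\tau')=(F|\cT\updown_2)(\tau,\sqrt{2}z,\tau')$ and the passage to Fourier coefficients) that the paper leaves implicit.
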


\begin{proof}
 This follows from Theorem \ref{th:main-2}. 
\end{proof}

A straightforward calculation shows the following:

\begin{lemma}
Let $F\in S_k(\Gamma_2)$.
 For $r\in \Z$, we have
\begin{align*}
 A_F\up(4,r,3)&=\sum_{r_1,r_2,r_3\in\Z,\,2r_1+r_2+r_3=r}
 A_F(1,r_1,1)\left\{
A_F(2,r_2,1)A_F(2,r_3,1)\right.\\
&\qquad \left.-A_F(3,r_2,1)A_F(1,r_3,1)-A_F(1,r_2,1)A_F(3,r_3,1)
\right\}
\end{align*}
and
 \begin{align*}
  A_F\down(4,r,3)&=\sum_{r_1,r_2,r_3\in\Z,\,2r_1+r_2+r_3=r}
 \left\{
-A_F(2,r_1,1)A_F(1,r_2,1)A_F(1,r_3,1)\right.\\
&\qquad \left.-A_F(1,r_1,1)A_F(2,r_2,1)A_F(1,r_3,1)
-A_F(1,r_1,1)A_F(1,r_2,1)A_F(2,r_3,1)
\right\}.
 \end{align*}
\end{lemma}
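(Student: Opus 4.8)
The plan is to compute both $A_F^{\uparrow}(4,r,3)$ and $A_F^{\downarrow}(4,r,3)$ directly from the definitions of $F^{\uparrow}$ and $F^{\downarrow}$ as triple products, by extracting the coefficient of $\e(4\tau + rz + 3\tau')$ in each. Since the computation is symmetric in the roles of $\uparrow$ and $\downarrow$ up to swapping $\tau \leftrightarrow \tau'$, I will do one case carefully and transcribe the other.

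For the $\uparrow$ case, recall $F^{\uparrow}(\tau,z,\tau') = F(2\tau,2z,\tau')\,F(\tau/2,z,\tau')\,F((\tau+1)/2,z,\tau')$. First I would expand each factor as a Fourier series: writing $F = \sum A_F(n,r,m)\e(n\tau+rz+m\tau')$, the first factor contributes $\sum A_F(n_1,r_1,m_1)\e(2n_1\tau + 2r_1 z + m_1\tau')$, while the second and third factors contribute $\sum A_F(n_i,r_i,m_i)\e(n_i\tau/2 + r_i z + m_i\tau')$ and $\sum A_F(n_3,r_3,m_3)\e(n_3(\tau+1)/2 + r_3 z + m_3\tau')$ respectively, the last carrying an extra phase $\e(n_3/2)=(-1)^{n_3}$. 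Multiplying out, the coefficient of $\e(4\tau+rz+3\tau')$ requires $2n_1 + (n_2+n_3)/2 = 4$, $2r_1 + r_2 + r_3 = r$, and $m_1+m_2+m_3 = 3$. Since $A_F$ is supported on $n \geq 1$ (and indeed on $4nm > r^2$, so each $m_i \geq 1$), the constraint $m_1+m_2+m_3=3$ forces $m_1=m_2=m_3=1$; then $4n_1 + n_2 + n_3 = 8$ with $n_i\geq 1$ forces $n_1 = 1$ and $n_2+n_3 = 4$, so $(n_2,n_3) \in \{(1,3),(2,2),(3,1)\}$. The $(-1)^{n_3}$ phase is $-1$ for $(1,3)$ and $(3,1)$ and $+1$ for $(2,2)$, which produces exactly the signs $-A_F(3,r_2,1)A_F(1,r_3,1) - A_F(1,r_2,1)A_F(3,r_3,1) + A_F(2,r_2,1)A_F(2,r_3,1)$ inside the braces, multiplied by the first-factor coefficient $A_F(1,r_1,1)$, summed over $2r_1+r_2+r_3 = r$. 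Relabeling $(n_2,n_3)$ against the stated formula, I would also need to observe that $A_F(n,r,m) = A_F(m,r,n)$ for Siegel modular forms (invariance under the transpose), which lets me recognize the result in the form written. This gives the first displayed equality.

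For the $\downarrow$ case, $F^{\downarrow}(\tau,z,\tau') = F(\tau,2z,2\tau')\,F(\tau,z,\tau'/2)\,F(\tau,z,(\tau'+1)/2)$. Extracting the coefficient of $\e(4\tau+rz+3\tau')$ now imposes $n_1+n_2+n_3 = 4$ with each $n_i \geq 1$ and $2m_1 + (m_2+m_3)/2 = 3$ with each $m_i\geq 1$ and the extra phase $(-1)^{m_3}$ from the shift in $(\tau'+1)/2$. The $m$-constraint forces $m_1 = 1$, $m_2+m_3 = 4$, so $(m_2,m_3)\in\{(1,3),(2,2),(3,1)\}$; and the $n$-constraint $n_1+n_2+n_3 = 4$ combined with $A_F$ vanishing unless each $n_i$ contributes... wait, here I must be careful: the $n$-sum is $n_1+n_2+n_3 = 4$ with $n_i\geq 1$, giving $(n_1,n_2,n_3)$ among permutations of $(2,1,1)$, while the $z$-constraint is $2r_1 + r_2 + r_3 = r$. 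After using $A_F(n,r,m) = A_F(m,r,n)$ to rewrite $A_F(n_i,r_i,1)$ everywhere, the $(m_2,m_3)$ possibilities $(1,3),(3,1)$ carry sign $-1$ and would contribute terms involving $A_F(n_i,r_i,3)$, while $(2,2)$ carries $+1$; but these are absent from the stated formula for $A_F^{\downarrow}(4,r,3)$, so I expect that the stated lemma has implicitly reorganized the sum so that only the $n$-distribution $(2,1,1)$ with all $m_i=1$ survives after collecting — meaning I should re-examine whether the $\downarrow$ expansion instead forces $m_1=m_2=m_3=1$ as well. In fact $2m_1 + (m_2+m_3)/2 = 3$ with $m_i \geq 1$ does force $m_1 = 1, m_2+m_3=4$; so the three summands in the stated formula, all of the shape $-A_F(\cdot,\cdot,1)^3$-type products with exactly one entry equal to $2$, must come from matching the full set of constraints including cancellations among the $m_3$-phase terms against $A_F(n,r,m)=A_F(m,r,n)$. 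I would carry out this bookkeeping explicitly; it is a finite check.

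The main obstacle is precisely this last reconciliation: getting the signs and the index labels to line up exactly with the stated formulas, keeping track of (a) the phase factors $(-1)^{n_3}$ and $(-1)^{m_3}$ coming from the $\tau \mapsto (\tau+1)/2$ and $\tau' \mapsto (\tau'+1)/2$ shifts, (b) the symmetry $A_F(n,r,m) = A_F(m,r,n)$, and (c) the support condition $4nm > r^2$ which forces all the "small" indices to equal $1$. None of this is deep — it is a direct coefficient extraction — but it is the kind of computation where a stray sign or a mislabeled summation variable is easy to introduce, so I would set up the notation once, carefully, and verify the $\uparrow$ and $\downarrow$ cases in parallel to exploit their near-symmetry.
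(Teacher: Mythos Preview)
Your approach is exactly the paper's ``straightforward calculation'', and your treatment of the $\uparrow$ case is correct as written (the symmetry $A_F(n,r,m)=A_F(m,r,n)$ is in fact not needed there: the indices already come out in the stated form).

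The confusion in the $\downarrow$ case is a simple arithmetic slip. From $2m_1 + (m_2+m_3)/2 = 3$ you get $4m_1 + m_2 + m_3 = 6$, not $8$; with $m_i \ge 1$ this forces $m_1 = m_2 = m_3 = 1$ (so $m_2+m_3 = 2$, not $4$). Hence there is a single global phase $(-1)^{m_3} = -1$, and no terms involving $A_F(\cdot,\cdot,2)$ or $A_F(\cdot,\cdot,3)$ in the last two slots ever appear. The remaining constraint $n_1+n_2+n_3 = 4$ with $n_i \ge 1$ gives exactly the three compositions $(2,1,1),(1,2,1),(1,1,2)$, producing the three displayed terms, each with the overall minus sign. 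No cancellation or use of $A_F(n,r,m)=A_F(m,r,n)$ is required; once the arithmetic is fixed the $\downarrow$ case is actually the easier of the two.
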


The values of $A_F\up(4,r,3)$ and $A_F\down(4,r,3)$ for several $r$ with
$F=\chi_{10}$ and $F=\chi_{12}$ are
given as follows.

\bigskip

\begin{center}
 
\begin{tabular}{|c|c|c|c|c|}
\hline
$r$ & 0 & 1 & 2 & 3 \\
\hline\hline
$A_{\chi_{10}}\up(4,r,3)$ & -552 & 216 & 222 & -212  \\
\hline
$A_{\chi_{10}}\down(4,r,3)$ & -552 & 216 & 222 & -212  \\
\hline\hline
$A_{\chi_{12}}\up(4,r,3)$ & 143304 & -59112 & 65310 & -20396  \\
\hline
$A_{\chi_{12}}\down(4,r,3)$ & 43512 & 26424 & 11850 & 3364  \\
\hline
\end{tabular}

\end{center}

This table together with Lemma \ref{lem:fourier} show the following:

\begin{theorem}
  The Siegel cusp form $\chi_{12}$ is not a Borcherds lift.
\end{theorem}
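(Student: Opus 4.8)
The plan is to argue by contradiction. Suppose $\chi_{12}$ were a Borcherds lift. Then by Theorem~\ref{th:main-2} it satisfies the multiplicative symmetry, and in particular equation~(\ref{eq:ms}) holds for $n=2$; comparing Fourier coefficients of $\chi_{12}|\cT\up_2$ and $\chi_{12}|\cT\down_2$ then gives, by Lemma~\ref{lem:fourier}, a single unimodular constant $\epsilon$ with $A_{\chi_{12}}\up(n,r,m)=\epsilon\,A_{\chi_{12}}\down(n,r,m)$ for \emph{all} triples $(n,r,m)$. I would now exhibit two specific triples on which this proportionality fails no matter what $\epsilon$ is.

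Concretely, first I would pin down the relevant low Fourier coefficients of $\chi_{12}$. Since $\chi_{12}=\cV(\phi_{12,1})$, the formula $A(n,r,m)=\sum_{0<d\mid(n,r,m)}d^{11}c_{\phi_{12,1}}(nm/d^2,r/d)$ expresses every $A_{\chi_{12}}(n,r,m)$ in terms of the Jacobi coefficients $c_{\phi_{12,1}}(n',r')$, which are read off from the expansion of $\phi_{12,1}$ displayed in~\textbf{8.3} (the coefficients of $q,q^2,q^3$ in the variables $\zeta^{\pm j}$). In particular all the quantities $A_{\chi_{12}}(1,r_i,1)$, $A_{\chi_{12}}(2,r_i,1)$, $A_{\chi_{12}}(3,r_i,1)$ needed in the formulas of the second Lemma in~\textbf{8.4} are determined. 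Plugging these into those two explicit convolution sums yields the numbers $A_{\chi_{12}}\up(4,r,3)$ and $A_{\chi_{12}}\down(4,r,3)$ for $r=0,1,2,3$, namely the bottom two rows of the table in~\textbf{8.4}.

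Finally I would observe that no unimodular (indeed no) constant $\epsilon$ can satisfy $A_{\chi_{12}}\up(4,r,3)=\epsilon\,A_{\chi_{12}}\down(4,r,3)$ simultaneously for two values of $r$: from $r=0$ one would need $\epsilon=143304/43512$, while from $r=1$ one would need $\epsilon=-59112/26424$, and these ratios are unequal (and in any case the first is not of absolute value $1$). This contradicts Lemma~\ref{lem:fourier}, so $\chi_{12}$ is not a Borcherds lift. For completeness I would also note the consistency check that the analogous table entries for $\chi_{10}$ do satisfy the proportionality with $\epsilon=1$, matching the known fact (\cite{GN}) that $\chi_{10}$ \emph{is} a Borcherds lift.

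The only real work is the bookkeeping in the second and third steps: correctly extracting the Jacobi coefficients $c_{\phi_{12,1}}(n',r')$ for $n'\le 3$ from the printed expansion, assembling the $A_{\chi_{12}}(n,r,m)$ via the Saito--Kurokawa formula, and then evaluating the finite convolution sums over $\{(r_1,r_2,r_3):2r_1+r_2+r_3=r\}$ (the sums are finite because $A_{\chi_{12}}(n,r_i,1)=0$ unless $4n>r_i^2$). I expect this arithmetic to be the main, though entirely routine, obstacle; there is no conceptual difficulty once the table is in hand.
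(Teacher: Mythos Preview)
Your proposal is correct and follows exactly the paper's own argument: assume $\chi_{12}$ is a Borcherds lift, invoke Lemma~\ref{lem:fourier} to get a unimodular $\epsilon$ with $A_{\chi_{12}}\up(n,r,m)=\epsilon\,A_{\chi_{12}}\down(n,r,m)$, and then derive a contradiction from the table values at $(4,r,3)$. The only difference is that you spell out how the table entries are computed from the Saito--Kurokawa formula and the second lemma of \textbf{8.4}, whereas the paper simply presents the table.
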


\vspace{1cm}

\bigskip

\noindent
Bernhard Heim\\
\noindent
German University of Technology in Oman,
Way No. 36,
Building No. 331,
North Ghubrah, Muscat,
Sultanate of Oman\\
e-mail: bernhard.heim@gutech.edu.om

\bigskip

\noindent
Atsushi Murase\\
\noindent
Department of Mathematics, Faculty of Science, 
Kyoto Sangyo University, Motoyama, Kamigamo, 
Kita-ku, Kyoto 603-8555, Japan\\
e-mail: murase@cc.kyoto-su.ac.jp

\end{document}